\newtheorem{thm}{Theorem}
\newtheorem{prop}{Propositon}
\newtheorem{cor}{Corollary}
\newtheorem{lemm}{Lemma}
\newtheorem{ass}{Assumption}
\theoremstyle{definition}
\newtheorem{rem}{Remark}
\numberwithin{equation}{section}
\def\Delta{\delta}
\begin{document}
\title{Asymptotic formulas for the logarithm of general $L$-functions at 
and near $s=1$}
\author{Kohji Matsumoto and Yumiko Umegaki}
\date{}
\maketitle
\begin{abstract}
We consider a general form of $L$-function $L(s)$ defined by an Euler product and
satisfies several analytic assumptions.     We show several asymptotic formulas for
$L(1)$ and $\log L(1)$.     In particular those asymptotic formulas are valid for
Dirichlet $L$-functions attached to almost all Dirichlet characters.   Our theorems
should be compared with former results due to Elliott, Montgomery and Weinberger, 
etc.
\end{abstract}
\section{Introduction}
\par
The aim of the present paper is to obtain a certain asymptotic formula for the values of
general $L$-functions at and near the point $s=1$.   
The original motivation of the authors was to consider
the values of symmetric power $L$-functions \cite{MU2018}, but here, we begin with
recalling known facts in the case of classical Dirichlet $L$-functions.
\par
Let $\chi$ be a non-principal Dirichlet character mod $q$, and $L(s,\chi)$ the associated 
Dirichlet $L$-function.
It is not difficult to obtain (see \cite[Theorem~4.11 (c)]{MV}) that
\begin{align}\label{MV}
\log L(1,\chi)= \sum_{p\leq y}\frac{\chi (p)}{p} + C(1,\chi)
+O_{\chi}\bigg(\frac{1}{\log y}\bigg),
\end{align}
where (and in what follows) $p$ denotes prime numbers, and
\[
C(1,\chi)=\sum_{p:{\rm prime}}\sum_{k=2}^{\infty}\frac{\chi(p^k)}{kp^k}.
\]
However the above error term depends on $\chi$, hence on $q$.   It is an important problem 
how to obtain error estimates uniform in $q$.
In 1977, Montgomery and Weinberger proved that
\begin{prop}{\rm (Montgomery and Weinberger \cite[Lemma 2]{MW})}\label{lem:MW}
  Suppose that $0<\Delta<1$. Then for $(\log q)^{\Delta}\leq y \leq \log q$,
  and $\chi$ a primitive character $\pmod{q}$, $q>1$,
  \[
  \log L(1,\chi)=\sum_{p\leq y}\frac{\chi(p)}{p}+O_{\Delta}(1)
  \]
  unless $\chi$ lies in an exceptional set $E(\Delta)$.
  The set $E(\Delta)$ contains $\ll Q^{\Delta}$
  primitive characters $\chi$ with conductor $q\leq Q$.
\end{prop}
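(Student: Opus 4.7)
My plan is to start from identity (1.1) and control the $\chi$-dependent error term by a high-moment argument over characters. Since $|C(1,\chi)| \le \sum_p\sum_{k\ge 2}(kp^k)^{-1}$ is bounded by an absolute constant, the task reduces to showing that the discrepancy
\[
R(\chi, y) := \log L(1,\chi) - \sum_{p \le y}\frac{\chi(p)}{p} - C(1,\chi),
\]
interpreted as the conditionally convergent tail $\sum_{p>y}\chi(p)/p$, satisfies $R(\chi,y) = O_\Delta(1)$ for all primitive $\chi$ of conductor $q \le Q$ outside an exceptional set of cardinality $\ll Q^\Delta$. I would split this tail at $p = q^A$ with $A = A(\Delta)$ a large parameter: the ``far tail'' $\sum_{p > q^A}\chi(p)/p$ is handled by shifting the contour of $\log L(s,\chi)$ past $\Re s = 1$ and using a log-free zero density estimate, which forces us to exclude those characters whose $L$-function has a zero in a thin rectangle near $s=1$; these characters are absorbed into $E(\Delta)$.

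For the ``main tail'' $y < p \le q^A$, I would perform a dyadic decomposition into blocks $Y < p \le 2Y$ and compute the $2k$-th moment of each block over primitive $\chi \bmod q$. By orthogonality, one is led to count $2k$-tuples of primes with $p_1\cdots p_k \equiv p_{k+1}\cdots p_{2k} \pmod q$. The diagonal contribution, where the two products match in pairs, gives a bound of order $\phi(q)\cdot k!\,(Y\log Y)^{-k}$ via the prime number theorem; the off-diagonal contribution requires separate analysis but can be controlled once $A$ and $k$ are chosen appropriately in terms of $\Delta$. Summing over the $\ll \log q$ dyadic blocks and applying Cauchy--Schwarz, I aim to obtain an estimate of the shape
\[
\sum_{q \le Q}\,\sum_{\chi \bmod q}^{*}|R(\chi, y)|^{2k} \ll_k Q^{2}\,k!\,(y\log y)^{-k}.
\]
Chebyshev's inequality then bounds the number of $\chi$ with $|R(\chi,y)| > M$ by $\ll_k Q^2\,k!\,M^{-2k}(y\log y)^{-k}$; for $y \ge (\log q)^\Delta$, choosing $M$ and $k$ depending on $\Delta$ (essentially $k \asymp \Delta^{-1}$) makes this bound $\ll Q^\Delta$, which defines $E(\Delta)$.

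The principal obstacle is the sharp control of the off-diagonal terms in the moment expansion, and more generally the delicate balance between the parameters $k$, $A$, and $M$: the value of $k$ required to push the Chebyshev estimate down to $Q^\Delta$ grows as $\Delta \to 0$, but taking $k$ too large inflates the off-diagonal count, since tuples $(p_1,\dots,p_{2k})$ with $p_i \le q^A$ and $\prod_{i\le k} p_i \equiv \prod_{i>k}p_i\pmod q$ but with the two products unequal become too numerous once $q^{Ak}$ grows much faster than $q$. A secondary difficulty is ensuring that the exceptional set generated by the far-tail zero-density input can be absorbed into $E(\Delta)$ without exceeding the target $O(Q^\Delta)$ bound on its cardinality, which in practice dictates the admissible size of the rectangle used in the zero density estimate.
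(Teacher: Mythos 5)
Your proposal diverges from the paper's (i.e.\ Montgomery--Weinberger's) argument, and the divergence introduces a gap that cannot be repaired.  The actual proof defines $E(\Delta)$ \emph{once and for all} as the set of primitive $\chi \bmod q$, $q\le Q$, for which $L(s,\chi)$ has a zero in the rectangle $1-\Delta/7\le\sigma\le 1$, $|t|\le(\log q)^2$; it bounds $\#E(\Delta)\ll_\Delta Q^\Delta$ by a single application of the large-sieve zero-density estimate \eqref{zero-density}; and then, for every $\chi\notin E(\Delta)$, it proves $\log L(1,\chi)=\sum_{p\le y}\chi(p)/p+O_\Delta(1)$ \emph{deterministically}, using only the zero-free rectangle (via a Littlewood-type lemma for $\log L$, or equivalently the explicit formula).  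There is no splitting of the tail into a ``far'' part and a ``main'' part, and no moment computation over $\chi$.

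The moment step you propose for the range $y<p\le q^A$ cannot produce an exceptional set of size $\ll Q^\Delta$ when $y$ is allowed to be as small as $(\log q)^\Delta$.  Your Chebyshev estimate gives
\[
\#\{\chi : |R(\chi,y)|>M\}\ \ll\ Q^{2}\,k!\,M^{-2k}(y\log y)^{-k},
\]
so you would need $(y\log y)^kM^{2k}/k!\gg Q^{2-\Delta}$.  But with $y\asymp(\log Q)^\Delta$ and $M$ bounded, the quantity $(y\log y)^kM^{2k}/k!$ is maximized (over all $k$, even $k\to\infty$) at roughly $\exp\bigl(M^{2}\,y\log y\bigr)=\exp\bigl(O((\log Q)^\Delta\log\log Q)\bigr)=Q^{o(1)}$ for $\Delta<1$, which falls short of $Q^{2-\Delta}$ by essentially the full power of $Q$.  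In other words, orthogonality over $\chi\bmod q$ only yields logarithmic savings per dyadic block, and no choice of $k$ (whether $k\asymp\Delta^{-1}$ as you suggest, or $k$ growing with $Q$) converts that into a genuine power-of-$Q$ saving.  This is precisely why the Elliott-type moment argument (your Proposition~2) pays a price: its exceptional set is $\ll Q^{2/A}$ and its truncation point $(\log Q)^A$ is \emph{large}, whereas the Montgomery--Weinberger statement allows the much shorter $y=(\log q)^\Delta$ and gets $Q^\Delta$.  The two theorems are not interchangeable, and the short range here forces the zero-density/zero-free-region route.

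Concretely, the fix is to drop the dyadic moment computation entirely: once $\chi\notin E(\Delta)$, the zero-free rectangle of width $\Delta/7$ and height $(\log q)^2$ already lets you run the contour/explicit-formula argument down to $p\le y$ for any $y\ge(\log q)^\Delta$, giving the $O_\Delta(1)$ error uniformly in $\chi$.  Your ``far tail'' step is essentially this argument; you should realize it controls the whole tail $p>y$, not merely $p>q^A$, and then the moment machinery (and the delicate parameter balancing you flag as the main obstacle) becomes unnecessary.
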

\begin{rem}\label{rem_MW}
The definition of $E(\delta)$ in \cite{MW} is that it consists of all primitive characters
$\chi$ mod $q$, $q\leq Q$, for which $L(s,\chi)$ has at least one zero in the rectangle
$1-\delta/7\leq \sigma\leq 1$, $|t|\leq (\log q)^2$.
  Note that the implied constant in the above estimation of $\# E(\Delta)$
  depends on $\Delta$.
  Actually, in the proof of this result in \cite{MW},
  they use the large sieve inequality
  (see \cite[Theorem 12.2]{Mont})
\begin{align}\label{zero-density}
  \sum_{q\leq Q}{\sum_{\chi}}^* N(\sigma, T,\chi)
  \ll (Q^2T)^{3(1-\sigma)}(\log QT)^9,
\end{align}
where $N(\sigma,T,\chi)$ denotes the number of zeros of $L(s,\chi)$ in the region
$\sigma\leq\Re s\leq 1$, $|\Im s|\leq T$, and the asterisk means that the summation
  runs over all primitive characters mod $q$.    They apply this inequality
  with $\sigma=1-\Delta/7$ and $T=(\log Q)^2$ 
to show that
\begin{align}\label{estimate_E}
  \# E(\Delta)
  \ll
  (Q\log Q)^{6\Delta/7}(\log Q)^9
  \ll_{\Delta} Q^{\Delta}.
\end{align}
In Morita et al. \cite{MUU}, a result similar to the above proposition but $y$ is replaced by
$h\log Q$ ($0 < h \leq 1$) is shown.
\end{rem}

\par
Another type of evaluation of $L(1,\chi)$ was obtained by
Elliott~\cite{Ecrelle} \cite{E}.
We state his result in the form given in Granville and Soundararajan 
\cite[Proposition 2.2]{GS2003}.
\begin{prop}[Elliott]\label{lem:E}
Let $Q$ be large, and $1\leq A\leq \log\log Q$.   Then for all but at most $Q^{2/A}$
primitive characters $\chi \pmod{q}$ with $q\leq Q$ we have
\[
L(1,\chi)=
\left(1+O\left((A^2+\log^2 y)\frac{\log Q}{y^{1/4A}}\right)\right)
\prod_{p\leq y}\left(1-\frac{\chi(p)}{p}\right)^{-1}
\]
for $y\leq Q/2$, and further
\[
L(1,\chi)=
\left(1+O\left(\frac{1}{\log\log Q}\right)\right)
\prod_{p\leq (\log Q)^A}\left(1-\frac{\chi(p)}{p}\right)^{-1}
\]
holds for all but at most $Q^{2/A+5\log\log\log Q/\log\log Q}$ primitive characters
$\chi$ (mod $q$) with $q\leq Q$.
\end{prop}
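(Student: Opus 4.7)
The plan is to compare $\log L(1,\chi)$ with $\log\prod_{p\leq y}(1-\chi(p)/p)^{-1}$ and bound the difference uniformly for all but a thin exceptional set of primitive characters. Expanding each logarithm one gets
\[
\log L(1,\chi)+\sum_{p\leq y}\log(1-\chi(p)/p)=\sum_{p>y}\sum_{k=1}^{\infty}\frac{\chi(p^k)}{kp^k},
\]
and since the contribution from $k\geq 2$ is absolutely $O(1/y)$, everything reduces to bounding the tail $U(y;\chi):=\sum_{p>y}\chi(p)/p$ outside a small exceptional set.

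To handle $U(y;\chi)$, I would use a smoothed Perron-type formula. Choose a smooth cut-off $\phi$ supported on a dyadic range between $y$ and $x$, where $x$ is a small power of $Q$, and write
\[
\sum_{n}\frac{\Lambda(n)\chi(n)}{n}\phi(n)=\frac{1}{2\pi i}\int_{(c)}\Bigl(-\frac{L'}{L}(s+1,\chi)\Bigr)\hat\phi(s)\,ds.
\]
Shifting the contour from $\Re s=c>0$ across $s=0$ to $\Re s=-\eta$ picks up $\log L(1,\chi)+O(1)$ from the residue at $s=0$, together with residues at zeros of $L(s+1,\chi)$ lying in the rectangle
\[
R(\eta,T):=\{1-\eta\leq\Re s\leq 1,\ |\Im s|\leq T\}.
\]
If no such zero occurs, the shifted integral is controlled by the standard Borel--Carath\'eodory bound $|L'/L(s,\chi)|\ll \eta^{-1}(\log qT)^2$ on the new vertical line, multiplied by the factor $y^{-\eta}$ coming from $\hat\phi$. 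Partial summation then converts this into the multiplicative form
\[
L(1,\chi)=\Bigl(1+O\bigl(\eta^{-1}(\log qT)^2 y^{-\eta}\bigr)\Bigr)\prod_{p\leq y}\Bigl(1-\frac{\chi(p)}{p}\Bigr)^{-1}.
\]

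It remains to count the characters for which $L(s,\chi)$ has at least one zero in $R(\eta,T)$. By the large sieve zero-density estimate \eqref{zero-density}, this count is $\ll (Q^2T)^{3\eta}(\log QT)^9$. Calibrating $\eta=1/(4A)$ and taking $T$ a modest power of $\log Q$ makes the exceptional count $\ll Q^{2/A}$ while forcing the error to match $(A^2+\log^2 y)(\log Q)/y^{1/(4A)}$: the $A^2$ comes from $\eta^{-2}$ after bookkeeping and $\log^2 y$ from the $(\log qT)^2$ in the bound for $L'/L$ once $T$ is tuned to the scale of $y$. This gives the first assertion. For the second one specialises $y=(\log Q)^A$ and slightly enlarges $\eta$ (which shifts the contour deeper and shrinks the error to $1/\log\log Q$); this inflation of $\eta$ is precisely what inflates the exceptional count through \eqref{zero-density}, producing the extra factor $Q^{5\log\log\log Q/\log\log Q}$.

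The main obstacle is the simultaneous calibration of the three parameters $\eta$, $T$ and the smoothing scale $x$: the shifted-integral error must hit $(A^2+\log^2 y)(\log Q)/y^{1/(4A)}$ while the zero-density count stays at $Q^{2/A}$, and the bound $|L'/L|\ll \eta^{-1}(\log qT)^2$ on the shifted line must hold uniformly for all $\chi\notin E$. A secondary subtlety is that in the second assertion one cannot afford $T$ too large (else the exceptional count overshoots), yet must keep $T$ large enough so that the contour shift actually captures the tail $\sum_{p>(\log Q)^A}\chi(p)/p$; balancing this trade-off is exactly what introduces the slow function $5\log\log\log Q/\log\log Q$.
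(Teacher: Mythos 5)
First, note that the paper does not prove Proposition~\ref{lem:E}; it is cited verbatim from Granville--Soundararajan \cite[Proposition 2.2]{GS2003}, who in turn follow Elliott. So there is no ``paper's own proof'' to compare against; I am evaluating the sketch on its merits, noting that the machinery it invokes overlaps with Proposition~\ref{Lemma_GS} (GS Lemma 8.2) and the zero-density estimate \eqref{zero-density}, both quoted in the paper.

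The sketch has several genuine gaps. (a) The residue claim is wrong: $-\frac{L'}{L}(s+1,\chi)\hat\phi(s)$ has no pole at $s=0$ unless $\hat\phi$ does, and even with $\phi(t)=e^{-t/v}$ (so $\hat\phi(s)=v^s\Gamma(s)$) the residue there is $-\frac{L'}{L}(1,\chi)$, not $\log L(1,\chi)$. To make the contour shift produce $\log L$ one must either work with $\log L(s+w,\chi)$ directly (as in Proposition~\ref{Lemma_GS}) or integrate the resulting identity over the abscissa, as the paper does in the proof of Lemma~\ref{aux}; the sketch does neither. (b) The zero-density bookkeeping does not close as described. To apply the contour argument out to $y\le Q/2$ one needs a zero-free rectangle of height $T\gtrsim y$, and then \eqref{zero-density} with $1-\sigma=\eta$ gives an exceptional count $\ll(Q^2T)^{3\eta}(\log QT)^9\gg Q^{9\eta}$; with $\eta=1/(4A)$ that is $Q^{9/(4A)}$, not $Q^{2/A}$, and the exponent mismatch gets worse if one instead takes $\eta$ small enough to hit $Q^{2/A}$, which then spoils the claimed $y^{-1/4A}$ error. (c) The proposed derivation of the second assertion fails outright: substituting $y=(\log Q)^A$ into the first formula gives an error $\asymp A^2(\log\log Q)^2(\log Q)^{3/4}\to\infty$, so it is not a matter of ``slightly enlarging $\eta$.'' To push the error down to $1/\log\log Q$ one would need $\eta>2/A$, at which point \eqref{zero-density} yields an exceptional set $\gg Q^{12/A}$ --- far beyond $Q^{2/A+o(1)}$. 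The missing ingredient, which is the heart of Elliott's and Granville--Soundararajan's argument, is a moment/large-sieve bound for the dyadic blocks $\sum_{Y<p\le 2Y}\chi(p)/p$: one uses the dual large sieve (or high moments of these short sums over $\chi$) to show the tail beyond $(\log Q)^A$ is small for all but $Q^{2/A+o(1)}$ characters, and only combines this with a zero-density input at a fixed small height $T$. That averaging step is what actually produces the $Q^{2/A}$ and $Q^{2/A+5\log\log\log Q/\log\log Q}$ counts, and it is absent from the sketch.
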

\par
In 2001, on the values of Dirichlet $L$-functions in a certain rectangle, 
Granville and Soundararajan obtained:
\begin{prop}{\rm (Granville and Soundararajan \cite[Lemma 8.2]{GS2001})}\label{Lemma_GS}
  Let $s=\sigma+it$ with $\sigma>1/2$ and $|t|\leq 2q$. Let $y\geq 2$ be
  a real number and let $1/2\leq \sigma_0<\sigma$.
  Suppose that there are no zeros of $L(z,\chi)$ inside the rectangle
  $\{z \;:\; \sigma_0\leq \Re(z) \leq 1,\; |\Im(z)-t|\leq y+3\}$.
  Put $\sigma_1=\{(\sigma+\sigma_0)/2,\;\sigma_0+(\log y)^{-1}\}$.
  Then
  \[
  \log L(z,\chi)=\sum_{n=2}^y\frac{\Lambda (n)\chi(n)}{n^s\log n}
  +O\bigg(\frac{\log q}{(\sigma_1-\sigma_0)^2}y^{\sigma_1-\sigma}\bigg).
  \]
\end{prop}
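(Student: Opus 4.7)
The plan is to apply an effective Perron-type formula and shift the contour into the zero-free rectangle. For $\Re w$ sufficiently large, the series $\log L(z+w,\chi)=\sum_{n=2}^{\infty}\Lambda(n)\chi(n)n^{-(z+w)}/\log n$ converges absolutely, and Perron's formula gives, for a suitable $c>0$ and any $T>0$,
\[
\sum_{n=2}^{y}\frac{\Lambda(n)\chi(n)}{n^z\log n}
=\frac{1}{2\pi i}\int_{c-iT}^{c+iT}\log L(z+w,\chi)\,\frac{y^w}{w}\,dw+R(y,T),
\]
where $R(y,T)$ is a standard truncation remainder that is negligible once $T\asymp y+3$.

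I would then shift the vertical segment from $\Re w=c$ to $\Re w=\sigma_1-\sigma$, which lies to the left of $w=0$ since $\sigma_1<\sigma$. The zero-free hypothesis ensures $\log L(z+w,\chi)$ is holomorphic in the rectangle $\sigma_1-\sigma\le\Re w\le c$, $|\Im w|\le T$: for $\Re(z+w)>1$ this is automatic from the Euler product, for $\sigma_0\le\Re(z+w)\le 1$ it is the no-zero assumption, and the height is controlled by $T\le y+3$. The only singularity of the integrand inside is the simple pole of $y^w/w$ at $w=0$, with residue $\log L(z,\chi)$. Rearranging,
\[
\log L(z,\chi)=\sum_{n=2}^{y}\frac{\Lambda(n)\chi(n)}{n^z\log n}+I_{\text{vert}}+I_{\text{horiz}}-R(y,T),
\]
where $I_{\text{vert}}$ is the integral along $\Re w=\sigma_1-\sigma$ and $I_{\text{horiz}}$ is the contribution of the two horizontal segments at $\Im w=\pm T$.

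The analytic heart of the argument is a pointwise upper bound for $|\log L(z',\chi)|$ on the shifted line. I would derive it from the Borel--Carath\'eodory theorem applied to $\log L$ on a disk centered at $2+i\tau$ (for $\tau$ near $t$) of radius slightly exceeding $2-\sigma_0$: the function is holomorphic there by hypothesis, its real part is dominated by the convexity estimate $\log|L(\cdot,\chi)|\ll\log q$ (valid for $|\Im z'|\le 2q$), and at the center it is $O(1)$. Restricting to the smaller concentric disk of radius $2-\sigma_1$ yields
\[
|\log L(z',\chi)|\ll\frac{\log q}{\sigma_1-\sigma_0}
\]
throughout the shifted contour. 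Feeding this into $I_{\text{vert}}$ with $|y^w|=y^{\sigma_1-\sigma}$ and estimating $\int_{-T}^{T}|\sigma_1-\sigma+iu|^{-1}\,du$ produces a bound of the announced form; the second factor $(\sigma_1-\sigma_0)^{-1}$ is recovered after absorbing the logarithmic contribution of this integral using the built-in lower bound $\sigma_1-\sigma_0\ge 1/\log y$ that is baked into the very definition of $\sigma_1$.

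The main obstacle is the bookkeeping rather than any conceptually new step: the horizontal pieces $I_{\text{horiz}}$ require another Borel--Carath\'eodory application in translated disks at height $T$, and the Perron remainder $R(y,T)$ must be verified to sit inside the claimed error. Once every factor of $\log q$, $\log y$, and $(\sigma_1-\sigma_0)$ is tracked through the estimates, the announced bound $\log q\cdot(\sigma_1-\sigma_0)^{-2}y^{\sigma_1-\sigma}$ emerges.
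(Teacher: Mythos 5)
The paper does not actually prove this proposition---it is quoted from Granville and Soundararajan \cite[Lemma 8.2]{GS2001} and used as background. Your outline (Perron inversion for $\log L$, contour shift into the zero-free rectangle, Borel--Carath\'eodory to bound $\log L$ on the shifted lines) is indeed the standard route, and it is in the same spirit as what the paper itself does in Lemma~\ref{aux} of Section~\ref{sec-5}, where a Mellin inversion of $F'/F$ is shifted left and controlled via the bound \eqref{diff_logL}.

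There is, however, a concrete gap in the bookkeeping of the vertical integral. With the sharp kernel $y^w/w$, the integral along $\Re w=\sigma_1-\sigma$ contributes
\[
\ll \frac{\log q}{\sigma_1-\sigma_0}\,y^{\sigma_1-\sigma}\int_{-T}^{T}\frac{du}{\sqrt{(\sigma-\sigma_1)^2+u^2}}
\ll \frac{\log q}{\sigma_1-\sigma_0}\,y^{\sigma_1-\sigma}\,\log\frac{T}{\sigma-\sigma_1},
\]
and with $T\asymp y$ this logarithm is at least of order $\log y$. You claim this is absorbed into the second factor $(\sigma_1-\sigma_0)^{-1}$ using $\sigma_1-\sigma_0\ge 1/\log y$, but that inequality runs in the \emph{wrong} direction: it gives $(\sigma_1-\sigma_0)^{-1}\le\log y$, so the stated error term $\log q\,(\sigma_1-\sigma_0)^{-2}\,y^{\sigma_1-\sigma}$ is actually \emph{smaller} than what your computation yields, namely $\log q\,\log y\,(\sigma_1-\sigma_0)^{-1}\,y^{\sigma_1-\sigma}$. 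Moreover, $\sigma-\sigma_1$ (the distance from the pole to the shifted line) can be far smaller than $\sigma_1-\sigma_0$ in the regime where $\sigma_1=\sigma_0+1/\log y$, so $\log(T/(\sigma-\sigma_1))$ can in fact blow up.

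The fix---and what is actually needed to obtain the stated $(\sigma_1-\sigma_0)^{-2}$ without an extra logarithm---is to use a smoothed kernel with faster decay in $\Im w$, e.g.\ $y^w/w^2$ or a $\Gamma$-factor, so the vertical integral converges like $\int |w|^{-2}\,du\ll(\sigma-\sigma_1)^{-1}$ with no $\log T$; one then controls the discrepancy between the smoothed and sharp sums separately. This is precisely why the paper's own Lemma~\ref{aux} inserts the factor $e^{-p^{\ell}/v}$ via $\Gamma(z)$ before shifting: the rapid decay of $\Gamma$ removes the truncation and horizontal-segment difficulties entirely. As written, your argument establishes a slightly weaker error term, not the bound claimed in Proposition~\ref{Lemma_GS}.
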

\par
The above results have been generalized to other $L$-functions.
For example, 
Lau and Wu \cite{LW} obtained a result similar to Elliott's one for symmetric power
$L$-functions.

Also in the case of symmetric power $L$-functions,
in \cite{MU2018}, we have an asymptotic formula
for the values of logarithms of them,
which is proved by applying the method in the proof of Duke \cite[Lemma~3]{duke}.

\par
In this paper, we apply Duke's argument to general $L$-functions and
obtain an asymptotic formula for the values of logarithms
of general $L$-functions at and near $s=1$.
Our result includes a new asymptotic formula in the case of
Dirichlet $L$-functions.
\bigskip

{\bf Acknowledgment}
The authors express their thanks to Professor A. Sankaranarayanan, whose comment
on our previous article \cite{MU2018} is one of the starting points of the
present research.

\section{General $L$-functions and their truncations}
\par
In this section we define the general form of $L$-functions which we will consider
in the present paper.
\par
Let $m$ be a fixed positive integer.
Let $s=\sigma + it$ $(\sigma, t\in\mathbb{R})$.
$L(s)$ is a $L$-function
defined by a Euler product
\begin{equation}\label{def}
L(s)
=\prod_{p\text{: prime}}\prod_{j=1}^{m}(1-c_j(p)p^{-s})^{-1}
\end{equation}
with $|c_j(p)| \leq 1$ for $\sigma>1$.
We know  $L(1+\varepsilon+it)\ll_{\varepsilon} 1$ for any $\varepsilon>0$.
\par
We assume the following assumptions.
%
%
\begin{ass}\label{conti-analy}
  $L(s)$ is continued analytically as a holomorphic function for
  $\sigma\geq \sigma_L$ $(1>\sigma_L\geq 1/2)$.
\end{ass}
%
%
\begin{ass}\label{zero-free}
Let $N$ be an intger with $\log N> e$
(which is equivalent to $N\geq 16$).
  There exists $0<\Delta<7(1-\sigma_L)\leq 7/2$ such that
  $L(s)$ has no zero in $D_{\Delta, N}$, where 
  \begin{align*}
    D_{\Delta, N}
    =&
    D_1
    \cup
    \{s\in\mathbb{C} \mid 1-\Delta/7 \leq \sigma <1, \; |t| \leq 2(\log N)^3\},
    \\
    D_1=& \{s\in\mathbb{C} \mid \sigma \geq 1\}.
  \end{align*}
  We note that $1/2\leq \sigma_L<1-\Delta/7$ and $0<\Delta<7/2$.
\end{ass}
%
%
\begin{ass}\label{convex}
  $L(s)$ has a bound
  \[
  L(s) \ll N^{\gamma_1}(1+|t|)^{\gamma_2}
  \]
  for $\sigma_L <\sigma < 4$, where $\gamma_1 >0$ and $\gamma_2>0$ are fixed real numbers.
\end{ass}

Many type of $L$-functions satisfy
Assumptions~\ref{conti-analy} and \ref{convex}.
In typical cases, $N$ can be a conductor of characters
or a level of newforms, etc.
\par
In this paper, the implied constants of $\ll$ depend on fixed constants
$m$, $\gamma_1$ and $\gamma_2$.

Next we define the finite truncation of our $L$-function.
We denote a partial $L$-function
\[
L_{p \leq (\log N)^A}(s)
=\prod_{p\leq (\log N)^A}\prod_{j=1}^{m}(1-c_j(p)p^{-s})^{-1}
\]
for $A > 0$.
Then $L_{p\leq (\log N)^A}(s)$ is defined for $\sigma >0$ and 
$L_{p\leq (\log N)^A}(s)\neq 0$. 
Let
$$
F(s)=\frac{L(s)}{L_{p\leq (\log N)^A}(s)}.
$$
Then, under the above assumptions, we may define
\[
\log F(s)=\log L(s)-\log L_{p\leq (\log N)^A}(s),
\]
which is holomorphic in $D_{\Delta, N}$.
For $\sigma >1$, we know 
\[
  \log F(s)
  =
  -\sum_{p>(\log N)^A}\sum_{j=1}^m\log (1-c_j(p)p^{-s})
  =
  \sum_{p>(\log N)^A}\sum_{j=1}^m\sum_{\ell=1}^{\infty}
    \frac{{c_j}^{\ell}(p)}{\ell p^{\ell s}}.
\]
Putting
\[
d_p(\ell) = \sum_{j=1}^m c_j^{\ell}(p)
\]
we see that 
$|d_p(\ell)| \leq m$, and
\begin{align*}
  \log F(s)
  =&
  \sum_{p>(\log N)^A}
  \sum_{\ell=1}^{\infty}
  \frac{d_p(\ell)}{\ell p^{\ell s}},
  \\
  \frac{F'}{F}(s)
  =&
  -
  \sum_{p>(\log N)^A}
  \sum_{\ell=1}^{\infty}
  \frac{d_p(\ell)\log p}{p^{\ell s}}.
\end{align*}
\section{Statement of results}

Now we state our main results.    The first result is as follows.

\begin{thm}\label{thm-1}
  Let $N\geq 16$ be a positive number.
  Under Assumptions~\ref{conti-analy}, \ref{zero-free} and \ref{convex},
  for $7/2 > \delta > 0$ and $A\geq 14/\delta$,
  we have
  \[
    L(1)=\bigg(1+O_{\delta, A}\bigg(\frac{1}{\log\log N}\bigg)\bigg)\prod_{p\leq (\log N)^A}\prod_{j=1}^m(1-c_j(p)p^{-1})^{-1}.
  \]
\end{thm}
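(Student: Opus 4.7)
The plan is to reduce the statement to proving $\log F(1) = O_{\delta,A}(1/\log\log N)$, which is legitimate because $L$ and $L_{p\leq(\log N)^{A}}$ are both holomorphic and non-vanishing at $s=1$ (the first by Assumption~\ref{zero-free}, the second trivially from the Euler product), so $F(1) = \exp(\log F(1)) = 1 + O(\log F(1))$. Following the contour-integration method behind Duke's Lemma~3, I would recover $\log F(1)$ as a residue. Set $X = (\log N)^{A}$, $T = 2(\log N)^{3}$, fix some $c > 0$, and consider
\[
J = \frac{1}{2\pi i}\int_{c-iT}^{c+iT}\log F(1+w)\,\Gamma(w)\,X^{w}\,dw.
\]
On $\Re w = c$ the Dirichlet series for $\log F(1+w)$ converges absolutely, and term-by-term integration combined with the classical Mellin identity $\frac{1}{2\pi i}\int_{(c)}\Gamma(w)\,Y^{w}\,dw = e^{-Y}$ yields, up to a negligible truncation error from the super-exponential decay of $\Gamma$ at $|\Im w|=T$,
\[
J = \sum_{p>(\log N)^{A}}\sum_{\ell=1}^{\infty}\frac{d_{p}(\ell)}{\ell p^{\ell}}\,e^{-p^{\ell}/X}.
\]

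Next I would shift the contour in $J$ to $\Re w = -\delta/7$, which is permissible on the truncated rectangle $[-\delta/7,c]\times[-T,T]$ because $\log F(1+w)$ is holomorphic whenever $1+w \in D_{\Delta,N}$, by Assumption~\ref{zero-free}. The only singularity crossed is the simple pole of $\Gamma(w)$ at $w=0$, whose residue is $\log F(1)$, so the residue theorem gives
\[
\log F(1) = J - \frac{1}{2\pi i}\int_{-\delta/7-iT}^{-\delta/7+iT}\log F(1+w)\,\Gamma(w)\,X^{w}\,dw + O(\text{horizontal pieces}).
\]
The horizontal pieces at $\Im w = \pm T$ are negligible because $|\Gamma(\sigma\pm iT)| \ll e^{-\pi T/2}$ is super-polynomially small in $\log N$ while every other factor grows only polynomially. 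In the sum defining $J$, the $\ell\geq 2$ contribution is $O(1/X)$, and the $\ell=1$ sum is controlled by $\sum_{p>X}(m/p)\,e^{-p/X} \ll 1/\log X = 1/(A\log\log N)$ via the prime number theorem.

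The main obstacle is the shifted vertical integral, for which I need a pointwise estimate of $|\log F(1-\delta/7+it)|$ uniform for $|t|\leq T$. Writing $\log F = \log L - \log L_{p\leq(\log N)^{A}}$, the partial-product term is handled by a standard Mertens-type estimate:
\[
\bigl|\log L_{p\leq(\log N)^{A}}(1-\delta/7+it)\bigr|
\ll \sum_{p\leq(\log N)^{A}}\frac{m}{p^{1-\delta/7}}
\ll_{\delta,A} \frac{(\log N)^{A\delta/7}}{\log\log N}.
\]
For $\log L$ itself I would apply a Borel--Carath\'eodory argument on a disk centred at $s_{0}=2+it$ whose radius is chosen to lie inside $D_{\Delta,N}$ yet cover $1-\delta/7+it$: combining the growth bound $|L(s)|\leq N^{\gamma_{1}}(1+|t|)^{\gamma_{2}}$ from Assumption~\ref{convex}, the non-vanishing of $L$ in $D_{\Delta,N}$, and $|L(s_{0})|\asymp 1$ (immediate from the Euler product at $\sigma=2$), yields $|\log L(1-\delta/7+it)|\ll_{\delta}\log N + \log(1+|t|)$. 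The rapid $\Gamma$-decay in $|\Im w|$ collapses the integral to the maximum of the integrand, and the prefactor $X^{-\delta/7}=(\log N)^{-A\delta/7}\leq(\log N)^{-2}$ (using $A\delta/7\geq 2$) absorbs $(\log N)^{A\delta/7}/\log\log N$ into $O(1/\log\log N)$, while the $\log L$ piece becomes $O((\log N)^{-1})$. Assembling the three estimates gives $\log F(1) = O_{\delta,A}(1/\log\log N)$, and exponentiating completes the proof.
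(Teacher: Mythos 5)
Your approach is in the same spirit as the paper's (both are contour-integration arguments modelled on Duke), but your route is genuinely different and more direct: you insert $\log F(1+w)$ into the Mellin integral and recover $\log F(1)$ as the residue at $w=0$, whereas the paper inserts $(F'/F)(u+z)$, recovers $(F'/F)(u)$ as the residue, and then integrates over $u\in[\alpha,2-\delta/7]$ to pass from the logarithmic derivative to the logarithm. The paper's extra integration over $u$ produces a factor $\int v^{-u}\,du \ll v^{-\alpha}/\log v$, and that $1/\log v \asymp 1/(A\log\log N)$ is precisely what makes their error bound comfortable at the borderline case $A=14/\delta$. Your direct method has to earn that saving elsewhere.

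The concrete gap is the choice of the shift line $\Re w=-\delta/7$. Assumption~\ref{zero-free} makes $\log L$ holomorphic on the closed set $\{\sigma\geq 1-\delta/7,\;|t|\leq 2(\log N)^3\}$, but the Borel--Carath\'eodory bound you invoke requires a disc around $s_0=2+it$ whose closure lies inside the zero-free region; with outer radius $R=1+\delta/7$ (the largest admissible) the bound degenerates as the inner radius approaches $R$, so you cannot get $|\log L(1-\delta/7+it)|\ll_\delta \log N$ --- only estimates at $\sigma=1-c\delta$ for $c<1/7$. If instead you shift only to $\Re w=-\delta/14$ (the paper's inner radius), the $\log L$ contribution to the shifted vertical integral is $\ll_\delta(\log N)\cdot X^{-\delta/14}=(\log N)^{1-A\delta/14}$, which at $A=14/\delta$ is $\asymp 1$, not $O(1/\log\log N)$, and the argument breaks.

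The fix is small: shift to $\Re w=-c\delta$ for any fixed $c$ with $1/14<c<1/7$ (say $c=1/10$). Then Borel--Carath\'eodory with outer radius $1+\delta/7$ and inner radius $1+c\delta<1+\delta/7$ gives $|\log L(1-c\delta+it)|\ll_{\delta}\log N$ uniformly for $|t|\leq T$, and your estimates go through: the $\log L_{p\leq X}$ piece contributes $X^{-c\delta}\cdot (\log N)^{cA\delta}/\log\log N\ll 1/\log\log N$, the $\log L$ piece contributes $X^{-c\delta}\log N=(\log N)^{1-cA\delta}$ with exponent $1-cA\delta\leq 1-14c<0$ (so it is $o(1/\log\log N)$), the $\ell=1$ sum is $\ll 1/\log X\ll 1/(A\log\log N)$, and the $\ell\geq 2$ and horizontal terms are negligible. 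With this correction your proof is sound and, for $\alpha=1$, arguably cleaner than the paper's detour through $F'/F$; what you lose is the paper's uniform auxiliary formula (Lemma~\ref{aux}) valid for a range of $\alpha$, which they reuse later in Theorems~\ref{thm-2} and~\ref{thm:main_p_s=1}.
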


\begin{cor}\label{cor-1}
  Let $Q\geq 16$ be a positive integer.
  Let $\chi$ be a primitive Dirichlet character of conductor $q\leq Q$.
  Then, for $7/2 > \Delta > 0$ and $A\geq 14/\delta$,
  we have
  \[
    L(1,\chi)=\bigg(1+O_{\delta, A}\bigg(\frac{1}{\log\log Q}\bigg)\bigg)\prod_{p\leq (\log Q)^A}(1-\chi(p)p^{-1})^{-1},
  \]
  unless $\chi$ lies in an exceptional set $E(\delta)$.
  Here $E(\delta)$ contains  
  $\ll_{\Delta} Q^{\Delta}$ primitive characters with conductor $q\leq Q$.
\end{cor}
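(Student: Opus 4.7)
The plan is to deduce Corollary~\ref{cor-1} directly from Theorem~\ref{thm-1} by specialising to $m=1$, $c_1(p)=\chi(p)$, and $N=Q$, with the exceptional set $E(\delta)$ chosen to be exactly the set of primitive characters for which Assumption~\ref{zero-free} fails.

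Since $\chi$ is a non-principal primitive Dirichlet character, $L(s,\chi)$ is entire, so Assumption~\ref{conti-analy} holds with, say, $\sigma_L=1/2$; this is permissible because $\delta<7/2$ forces $1-\delta/7>1/2$, giving $\sigma_L<1-\delta/7$ as required. Assumption~\ref{convex} follows from the classical convexity bound $L(s,\chi)\ll (q(1+|t|))^{(1-\sigma)/2+\varepsilon}$ on $1/2\leq\sigma\leq 4$; inserting $q\leq Q$ yields $L(s,\chi)\ll Q^{\gamma_1}(1+|t|)^{\gamma_2}$ for suitable $\gamma_1,\gamma_2$. Define
\[
E(\delta) := \bigl\{\chi\text{ primitive mod }q,\ q\leq Q \ :\ L(s,\chi)\text{ has a zero in } R_\delta\bigr\},
\]
where $R_\delta=\{s:1-\delta/7\leq\sigma<1,\ |t|\leq 2(\log Q)^3\}$. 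For $\chi\notin E(\delta)$, the classical non-vanishing of $L(s,\chi)$ on $\{\sigma\geq 1\}$ together with the defining property of $E(\delta)$ supplies Assumption~\ref{zero-free} with $N=Q$, and Theorem~\ref{thm-1} delivers the asserted asymptotic formula.

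It remains to estimate $|E(\delta)|$, which I would do exactly as in Remark~\ref{rem_MW}. Applying the large sieve zero-density inequality~\eqref{zero-density} with $\sigma=1-\delta/7$ and $T=2(\log Q)^3$ gives
\[
|E(\delta)| \leq \sum_{q\leq Q} {\sum_\chi}^{\!*} N\bigl(1-\tfrac{\delta}{7},\, 2(\log Q)^3,\, \chi\bigr) \ll \bigl(2Q^2(\log Q)^3\bigr)^{3\delta/7}(\log Q)^9 \ll_\delta Q^{6\delta/7}(\log Q)^{9+9\delta/7},
\]
and absorbing the logarithmic factor into $Q^{\delta/7}$ (with implied constant depending on $\delta$) yields $|E(\delta)|\ll_\delta Q^\delta$. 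I do not anticipate any serious obstacle: the corollary is essentially a repackaging of Theorem~\ref{thm-1} together with the large-sieve bookkeeping already laid out in Remark~\ref{rem_MW}, the only point of care being the harmless verification that $D_1$ is automatically zero-free and that $\sigma_L=1/2$ is a legitimate choice throughout the permitted range $0<\delta<7/2$.
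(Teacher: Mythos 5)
Your proof is correct and follows essentially the same route as the paper: verify the three assumptions for Dirichlet $L$-functions with $N=Q$, define $E(\delta)$ as the failure set of Assumption~\ref{zero-free}, bound its size via the zero-density estimate~\eqref{zero-density}, and invoke Theorem~\ref{thm-1}. You are in fact slightly more careful than the paper's one-line proof: you run the large sieve with $T=2(\log Q)^3$, matching the rectangle actually demanded by Assumption~\ref{zero-free}, whereas the paper simply cites the estimate~\eqref{estimate_E}, which was derived with $T=(\log Q)^2$; the numerical outcome $\ll_\delta Q^\delta$ is the same since the extra $\log$ powers are absorbed identically.
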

\begin{proof}
  Dirichlet $L$-functions satisfy Assumptions~\ref{conti-analy} and \ref{convex},
  with $N=Q$.
  At the end of the proof of Lemma~2 in Montgomery and Weinberger~\cite{MW},
  it is shown that the number of primitive characters which do not satisfy
  Assumption~\ref{zero-free} with $N=Q$ is
  $ \ll_{\Delta} Q^{\Delta}$
  (see \eqref{estimate_E}).
  Therefore Corollary \ref{cor-1} follows from Theorem \ref{thm-1}.
\end{proof}
\begin{rem}
  Comparing this corollary with Proposition \ref{lem:E} (with $A=14/\delta$), we see that
  the form of the formula in Corollary \ref{cor-1} is not so sharp as in the formula of
  Proposition~\ref{lem:E}.
  However, in Proposition \ref{lem:E} there is the condition $A\leq\log\log Q$,
  which implies $\delta\geq 14/\log\log Q$.
  In our corollary there is no such restriction on the range of $\delta$.
  \end{rem}

As for $\log L(1)$, we obtain the following theorems.
For $\alpha >1/2$, we define
\[
C(\alpha)=\sum_p\sum_{\ell=2}^{\infty}\frac{d_p(\ell)}{\ell p^{\ell\alpha}}.
\]
\begin{thm}\label{thm-2}
  Let $N\geq 16$ be a large positive integer.
  Under Assumptions~\ref{conti-analy}, \ref{zero-free} and \ref{convex},
  for $7/2 > \Delta > 0$,
  we have
  \[
  \log L(1)
  =
  \sum_{p\leq (\log N)^{14/\Delta}}\frac{d_p(1)}{p}
  +
  C(1)
  +
  O_{\delta}\bigg(
    \frac{1}{\log\log N}
    \bigg).
  \]
\end{thm}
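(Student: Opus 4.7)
The plan is to deduce Theorem \ref{thm-2} directly from Theorem \ref{thm-1} by taking logarithms. Specializing Theorem \ref{thm-1} with $A = 14/\Delta$ gives
$$L(1) = (1+R)\,P, \qquad P := \prod_{p \leq (\log N)^{14/\Delta}}\prod_{j=1}^m (1-c_j(p)p^{-1})^{-1},$$
with $R = O_\Delta(1/\log\log N)$. For $N$ sufficiently large (depending on $\Delta$), $|R| < 1/2$, so the principal logarithm gives $\log(1+R) = O_\Delta(1/\log\log N)$, and
$$\log L(1) = \log P + \log(1+R).$$
The branches of $\log L(1)$ and of the terms $-\log(1-c_j(p)/p)$ used in defining $\log P$ must be compatible; this is ensured by analytic continuation from $\sigma > 1$, where both $\log L(s)$ and the partial $\log L_{p\leq (\log N)^{14/\Delta}}(s)$ are defined by their absolutely convergent Dirichlet-type series and tend to $0$ as $\sigma \to \infty$, while the identity continues holomorphically through the zero-free region $D_{\Delta, N}$ containing $s=1$ (cf.\ the discussion of $\log F$ at the end of Section~2).

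Next I would expand via the Taylor series $-\log(1-c_j(p)/p) = \sum_{\ell\geq 1}c_j(p)^\ell/(\ell p^\ell)$ and use $d_p(\ell) = \sum_j c_j(p)^\ell$ to obtain
$$\log P = \sum_{p \leq (\log N)^{14/\Delta}} \frac{d_p(1)}{p} + \sum_{p \leq (\log N)^{14/\Delta}}\sum_{\ell \geq 2}\frac{d_p(\ell)}{\ell p^\ell}.$$
The first sum is the main term of Theorem \ref{thm-2}. The double sum over $\ell \geq 2$ equals $C(1)$ minus the tail $\sum_{p > (\log N)^{14/\Delta}}\sum_{\ell \geq 2}d_p(\ell)/(\ell p^\ell)$, which by $|d_p(\ell)|\leq m$ is bounded by
$$m\sum_{p > (\log N)^{14/\Delta}}\sum_{\ell\geq 2}\frac{1}{\ell p^\ell}\ll \sum_{p > (\log N)^{14/\Delta}}\frac{1}{p^2}\ll (\log N)^{-14/\Delta},$$
which is comfortably dominated by $1/\log\log N$. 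Combining these pieces yields the claimed formula.

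There is no substantial obstacle here: the heavy analytic work, namely the Perron--Mellin contour shifts that use the zero-free region $D_{\Delta,N}$ together with the convexity bound of Assumption \ref{convex}, is entirely contained in the proof of Theorem \ref{thm-1}. The only routine items to verify in the present deduction are the matching of logarithm branches (via continuation from $\sigma \to \infty$) and the smallness of the $\ell \geq 2$ tail, both of which are carried out above.
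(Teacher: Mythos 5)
Your overall plan matches the paper's: the paper proves Theorem~\ref{thm-2} by combining the estimate \eqref{tuika_de_labelling2} for $\log L(1)-\log L_{p\leq(\log N)^A}(1)$ (with $A=14/\delta$), which is established inside the proof of Theorem~\ref{thm-1}, with the expansion \eqref{sum_over_p} of $\log L_{p\leq(\log N)^{14/\delta}}(1)$ in terms of $\sum d_p(1)/p$, $C(1)$, and a small tail. Your computation of $\log P$ and the tail bound $\ll(\log N)^{-14/\Delta}$ is the same as \eqref{sum_over_p}, just slightly less sharp, and is correct.

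However, there is one step that is not fully justified. You start from the \emph{statement} of Theorem~\ref{thm-1}, i.e.\ the multiplicative form $L(1)=(1+R)\,P$, and want to conclude $\log L(1)-\log P=\log(1+R)$ (principal branch) $=O_\delta(1/\log\log N)$. Writing $w:=\log F(1)$ for the branch obtained by analytic continuation of $\log F(s)=\log L(s)-\log L_{p\leq(\log N)^{14/\Delta}}(s)$ from $\sigma>1$, the statement of Theorem~\ref{thm-1} only tells you $|e^{w}-1|\ll 1/\log\log N$, so a priori $w\in 2\pi i\mathbb{Z}+O(1/\log\log N)$; you still need to rule out $w=2\pi i k+O(1/\log\log N)$ with $k\neq 0$. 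Your appeal to analytic continuation and the fact that $\log F(\sigma)\to 0$ as $\sigma\to\infty$ establishes that $w$ is the right branch, but does not by itself show $w$ stays small along a path from large $\sigma$ down to $\sigma=1$, which is what you would need to conclude $k=0$ purely from the exponentiated statement. The cleanest repair is to invoke what the proof of Theorem~\ref{thm-1} actually establishes before exponentiating, namely the additive estimate $\log L(1)-\log L_{p\leq(\log N)^{14/\delta}}(1)\ll_{\delta}1/\log\log N$ (equation \eqref{tuika_de_labelling2} with $A=14/\delta$); this is precisely $w\ll 1/\log\log N$ and avoids the branch issue entirely. With that substitution, the rest of your argument (expanding $\log P$, peeling off $C(1)$, bounding the $\ell\geq 2$ tail) is correct and gives Theorem~\ref{thm-2}.
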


\begin{thm}\label{thm:main_p_s=1}
  Let $N\geq 16$ be a positive integer
  and $1-\delta/28 < \alpha < 2-\delta/7$.
  Under Assumption~\ref{conti-analy}, \ref{zero-free} and \ref{convex},
  for $7/2>\Delta>0$ and for $(\log N)^{14/\delta}\leq y \leq N^{2\pi/5}$,
  we have
\begin{align*}
  &\log L(\alpha)
  =
  \sum_{p \leq y} \frac{d_p(1)}{p^{\alpha}} +C(\alpha)
  \nonumber\\
  &+ 
  O\bigg(
    \frac{y^{1-\alpha}}{(2-\alpha)^2\log y} + 
    \frac{(\log N)^2}{(\log\log N)(\log N)^{14/\delta}}\nonumber\\
   &\qquad+
  \frac{y^{1+\delta/14}(\log N)^{\delta/14+3/2}}{\delta^2N^{\pi/2}}
  +
  \frac{y^{1-\delta/14-\alpha}(\log N)}{\delta^3 \log y}
%
  \bigg).
  \end{align*}
Especially, we have
\begin{align}\label{formula:main_p_s=1}
    &\log L(1)
  =
  \sum_{p \leq y} \frac{d_p(1)}{p} +C(1)
  \nonumber\\
  &+ 
  O\bigg(
    \frac{1}{\log y} + \frac{(\log N)^2}{(\log\log N)(\log N)^{14/\delta}}
  +
  \frac{y^{1+\delta/14}(\log N)^{\delta/14+3/2}}{\delta^2N^{\pi/2}}
  +
  \frac{\log N}{\delta^3 y^{\delta/14} \log y}
  \bigg).
\end{align}
\end{thm}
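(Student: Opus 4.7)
Proof plan.

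The strategy is to apply Duke's contour-integration argument (from \cite{duke}) to the Dirichlet series of $\log L$. For $\sigma > 1$ we have the convergent representation
\[
\log L(s) = \sum_{n \geq 1} \frac{b_n}{n^s}, \qquad b_{p^\ell} = \frac{d_p(\ell)}{\ell},
\]
supported on prime powers, with $|b_n| \leq m$. Splitting the truncated sum gives
\[
\sum_{n \leq y} \frac{b_n}{n^\alpha} = \sum_{p \leq y} \frac{d_p(1)}{p^\alpha} + C(\alpha) - \sum_{p > y}\sum_{\ell \geq 2} \frac{d_p(\ell)}{\ell p^{\ell \alpha}},
\]
and the last tail is $O(y^{1-2\alpha}/\log y)$ since $\alpha > 1/2$, comfortably absorbed by the stated error.

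Next, apply a truncated (possibly Gamma-smoothed) Perron formula
\[
\sum_{n \leq y} \frac{b_n}{n^\alpha} = \frac{1}{2\pi i}\int_{c-iT}^{c+iT}\log L(\alpha + w)\,\frac{y^w}{w}\,dw + (\text{Perron error}),
\]
with $c = 2-\alpha$ so that $\Re(\alpha+w) = 2$ lies safely in the absolute-convergence region, and with $T$ to be chosen. The truncation error is what produces the first stated term $y^{1-\alpha}/((2-\alpha)^2 \log y)$; the second term $(\log N)^2/((\log\log N)(\log N)^{14/\delta})$ appears when the smoothing scale of the Perron formula is matched against the lower endpoint $(\log N)^{14/\delta}$ of the admissible range of $y$.

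Shift the contour leftward from $\Re w = c$ to $\Re w = 1 - \alpha - \delta/14$, so that $\alpha + \Re w = 1 - \delta/14$ lies safely inside the zero-free rectangle of Assumption~\ref{zero-free} (since $1 - \delta/14 > 1 - \delta/7$). The only crossed singularity is the simple pole of $1/w$ at $w = 0$, with residue $\log L(\alpha)$; this produces the desired identity after rearrangement. To bound $\log L(\alpha + w)$ on the shifted line and the horizontal bridges at $|\Im w| = T$, I would apply Borel--Carath\'eodory (or Hadamard three-circles) with the convexity bound (Assumption~\ref{convex}) as upper bound on $|L|$ and the zero-freeness of Assumption~\ref{zero-free} as lower bound, obtaining $|\log L(\alpha + w)| \ll \delta^{-1}\bigl(\log N + \log(1 + |\Im w|)\bigr)$ throughout $D_{\Delta,N}$. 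Feeding this into the shifted vertical integral produces the $y^{1-\delta/14-\alpha}\log N/(\delta^3 \log y)$ term; truncating at $T = \log N$ with a kernel carrying $e^{-\pi|t|/2}$ decay (e.g.\ a Gamma factor) yields the horizontal-bridge contribution $y^{1+\delta/14}(\log N)^{\delta/14+3/2}/(\delta^2 N^{\pi/2})$. The specialization $\alpha = 1$ immediately gives \eqref{formula:main_p_s=1}.

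The main obstacle is the tight quantitative bookkeeping rather than any single conceptual step: matching the precise error exponents requires a careful simultaneous choice of $c$, $T$, the shifted line, and the smoothing kernel, together with a Borel--Carath\'eodory argument that tracks the $\delta$-dependence explicitly. Secondary care is needed to verify that, throughout the shift, the rectangle one sweeps across stays inside the zero-free region $D_{\Delta,N}$ of Assumption~\ref{zero-free}, which controls how wide the shift and how tall the truncation $T$ are allowed to be; the upper bound $y \leq N^{2\pi/5}$ in the hypothesis presumably arises at this stage to keep the horizontal bridge contribution genuinely small.
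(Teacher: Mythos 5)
Your outline is a recognizable Duke-style argument and it overlaps substantially with the paper's proof: both use a $\Gamma$-kernel to get exponential decay in $\Im z$, shift the contour into the zero-free strip $\Re(s)=1-\delta/14$ of Assumption~\ref{zero-free}, truncate at height $\log N$ to exploit $e^{-\pi\log N/2}=N^{-\pi/2}$, and afterwards remove the exponential smoothing by expanding $e^{-p/v}$. However, there is a genuine structural difference that you should be aware of, because it affects whether you can actually reach the stated error term. The paper does \emph{not} apply a Perron-type transform to $\log L$ itself. It applies the Mellin--$\Gamma$ transform to the logarithmic derivative $F'/F$ of the tail $F=L/L_{p\leq(\log N)^A}$, shifts the contour, and then \emph{integrates the resulting identity in $u$} over $\alpha\leq u\leq 2-\delta/7$ to recover $\log F(\alpha)$ from $F'/F(u)$. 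This $u$-integration is not a cosmetic step: it converts the vertical-line contribution $\frac{\log N}{\delta^2}\,v^{\beta-u}$ into $\frac{\log N}{\delta^2}\int_\alpha^{2-\delta/7} v^{\beta-u}\,du\ll\frac{\log N}{\delta^2}\cdot\frac{v^{\beta-\alpha}}{\log v}$, which is the source of the factor $1/\log y$ in the last error term $\frac{y^{1-\delta/14-\alpha}\log N}{\delta^3\log y}$. In your plan, with the Perron/$\Gamma$ transform applied directly to $\log L$, the residue at $w=0$ gives $\log L(\alpha)$ in one stroke and there is no $u$-integration; a direct estimate of the shifted vertical integral $\int_{\Re w=1-\delta/14-\alpha}\log L(\alpha+w)\,y^{w}\Gamma(w)\,dw$ would produce only something of the shape $y^{1-\delta/14-\alpha}\log N$ (up to powers of $\delta$), with no $1/\log y$ saving. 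That loss is not cosmetic either: as the paper observes in Remark~\ref{rem:thm3}, it is precisely the factor $1/\log y$ that makes this term $\ll 1/\log\log N$ at the lower endpoint $y=(\log N)^{14/\delta}$; without it the term is only $O(1)$ there, so your version of the theorem would be strictly weaker and the subsequent applications (Theorem~\ref{thm-2}, Corollaries~\ref{cor-1.5}--\ref{cor-2}) would not follow.

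Two further remarks. First, your hedge ``possibly $\Gamma$-smoothed'' should be a commitment, not an option: the unsmoothed truncated Perron integral has a truncation error of order $y^{2-\alpha}/T$, and since the zero-free rectangle of Assumption~\ref{zero-free} only reaches up to height $2(\log N)^3$, you cannot take $T$ larger than a power of $\log N$, which is hopeless for $y$ as large as $N^{2\pi/5}$. The exponential decay of $\Gamma$ is what lets you cut at $T=\log N$ with error $N^{-\pi/2}$; it is essential, not an optimization. Second, your attribution of the term $\frac{(\log N)^2}{(\log\log N)(\log N)^{14/\delta}}$ to ``matching the smoothing scale against the lower endpoint'' is off: in the paper it comes from estimating $\sum_{p>(\log N)^{14/\delta}}p^{-(2-\delta/7)}$, which bounds both $\log F(2-\delta/7)$ and the far end of the $u$-integration (see the computation around \eqref{2-2delta/7}). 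In your approach, which omits both $F$ and the $u$-integration, it is not clear that this term arises at all; you would need to re-derive the error terms from scratch rather than pattern-match them. The cleanest fix is to adopt the paper's log-derivative device: prove an analogue of your identity for $F'/F(u)$, bound $F'/F$ and $L'_{p\leq(\log N)^A}/L_{p\leq(\log N)^A}$ as in \eqref{diff_logL} and \eqref{diff_logpartL}, and then integrate over $u$ to harvest the $1/\log v$ factor before removing the smoothing.
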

\begin{rem}\label{rem:2_ijou}
In Theorem \ref{thm:main_p_s=1}, we require the condition $\alpha < 2-\delta/7$ on
the region of $\alpha$.    It is possible to extend the result to the case of larger
$\alpha$, but then, the statement should be modified (see Remark \ref{rem:2_ika}).
In any case, such an extension is not important for the main aim of the present paper.
\end{rem}
\begin{rem}\label{rem:thm3}
The assumption $(\log N)^{14/\delta}\leq y \leq N^{2\pi/5}$ ensures that the error term
on the right-hand side of \eqref{formula:main_p_s=1} is indeed $o(1)$ as $N\to\infty$.
In fact, since $1+\delta/14<5/4$, we see that
$ y^{1+\delta/14}(\log N)^{\delta/14+3/2}=o(N^{\pi/2})$.   Also
$y^{-\delta/14}\log N/\log y\ll (\log\log N)^{-1}$.
\end{rem}
\begin{rem}
The case $y=(\log N)^{14/\delta}$ in Theorem~\ref{thm:main_p_s=1} gives a formula
whose error estimate is better than that in Theorem~\ref{thm-2}.
However, the assumption $(\log N)^{14/\delta}\leq y \leq N^{2\pi/5}$ in Theorem
\ref{thm:main_p_s=1} implies the restriction
$(14/\delta)(\log\log N)\leq (2\pi/5)(\log N)$, hence
$\delta\geq (35\log\log N)/(\pi\log N)$.
Theorem~\ref{thm-2} can be applied to arbitrarily small $\delta$.
\end{rem}
\begin{cor}\label{cor-1.5}
    Let $N\geq 16$ be a positive integer.
  Under Assumptions~\ref{conti-analy}, \ref{zero-free} and \ref{convex},
  for $7/2>\Delta>0$.
  Let $a$ be real, satisfying 
  $\delta\log y > 28a \geq 0$
  for $(\log N)^{14/\delta}\leq y \leq N^{2\pi/5}$.
  We have
\begin{align*}
  &\log L(1-a/\log y)
  =
  \sum_{p \leq  y} \frac{d_p(1)}{p^{1-a/\log y}} +C(1-a/\log y)
  \nonumber\\
  &+ 
  O\bigg(
    \frac{e^a}{\log y} + \frac{(\log N)^2}{(\log\log N)(\log N)^{14/\delta}}
  +
  \frac{y^{1+\delta/14}(\log N)^{\delta/14+3/2}}{\delta^2N^{\pi/2}}
  +
  \frac{e^a (\log N)}{\delta^3 y^{\delta/14} \log y}
  \bigg).
\end{align*}
\end{cor}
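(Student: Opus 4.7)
The plan is to derive Corollary \ref{cor-1.5} as a direct specialization of Theorem \ref{thm:main_p_s=1} with the parameter choice $\alpha = 1 - a/\log y$. Under this substitution, the exponent shift $a/\log y$ converts into clean factors of $e^a$ via the identity $y^{1-\alpha} = y^{a/\log y} = e^a$, and the hypothesis $\delta\log y > 28a \geq 0$ is tailored precisely so that the admissibility window $1-\delta/28 < \alpha < 2-\delta/7$ of Theorem \ref{thm:main_p_s=1} is satisfied.

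First I would verify the range of $\alpha$. From $a \geq 0$ we get $\alpha \leq 1 < 2-\delta/7$, so the upper bound is automatic. From $28a < \delta\log y$ we get $a/\log y < \delta/28$, so $\alpha = 1 - a/\log y > 1 - \delta/28$, matching the lower bound exactly. Next I would apply Theorem \ref{thm:main_p_s=1} directly to obtain
\[
\log L(1 - a/\log y) = \sum_{p\leq y}\frac{d_p(1)}{p^{1-a/\log y}} + C(1-a/\log y) + O(\text{four error terms}),
\]
and then simplify the error terms one at a time.

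The first term $y^{1-\alpha}/((2-\alpha)^2 \log y)$ becomes $e^a/((1+a/\log y)^2 \log y) \ll e^a/\log y$, since $(1+a/\log y)^{-2} \leq 1$. The second and third error terms do not involve $\alpha$ and are carried over verbatim. The fourth term $y^{1-\delta/14-\alpha}(\log N)/(\delta^3 \log y)$ becomes $y^{a/\log y - \delta/14}(\log N)/(\delta^3 \log y) = e^a (\log N)/(\delta^3 y^{\delta/14}\log y)$, again using $y^{a/\log y} = e^a$. Collecting these gives exactly the error expression in the corollary.

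Since the proof is purely a substitution plus bookkeeping, I do not anticipate a substantive obstacle: the only point requiring care is checking that the specific range hypothesis $\delta\log y > 28a \geq 0$ matches the admissibility window of Theorem \ref{thm:main_p_s=1}, and that the factor $(2-\alpha)^{-2}$ is indeed harmless when $\alpha$ is near $1$. Both are immediate, so the corollary follows with no further work.
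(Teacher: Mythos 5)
Your proposal is correct and follows exactly the paper's own one-line proof, which simply substitutes $\alpha = 1 - a/\log y$ into Theorem~\ref{thm:main_p_s=1}; you have merely written out the routine bookkeeping ($y^{1-\alpha}=e^a$, the range check, and absorbing $(2-\alpha)^{-2}\le 1$) that the paper leaves implicit.
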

\begin{proof}
  In Theorem~\ref{thm:main_p_s=1}, we put $\alpha=1-a/\log y$.
\end{proof}
\begin{rem}
  A real number $a\geq 0$ with $2a <  \log\log N$ satisfies
  $28 a<\delta \log y$ with $(\log N)^{14/\delta}<y$.
  If $a<(1-\varepsilon)\log\log\log N$ for some $\varepsilon>0$, 
  the error term on the right-hand side of the formula in this corollary
  is $o(1)$ as $N\to\infty$ by the same reason as in Remark~\ref{rem:thm3}.
\end{rem}
Applying the above results to Dirichlet $L$-functions, we have

\begin{cor}\label{cor-2}
  Let $Q\geq 16$, $7/2>\Delta>0$, 
  and $(\log Q)^{14/\delta}\leq y \leq Q^{2\pi/5}$.
  For a real number $a$ with $\delta\log y > 28 a \geq 0$
  and a primitive Dirichlet character $\chi \pmod{q}$ with $q\leq Q$,
  we have
  \begin{align}\label{cor-2-1}
    &
    \log L(1-a/\log y, \chi)
    \nonumber\\
  =&
  \sum_{p\leq y}\frac{\chi (p)}{p^{1-a/\log y}}
  +
  C(1-a/\log y,\chi)
  \nonumber\\
  &+
  O\bigg(\frac{e^a}{\log y} +\frac{(\log Q)^2}{(\log\log Q)(\log Q)^{14/\delta}}+\frac{y^{1+\delta/14}(\log Q)^{\delta/14+3/2}}{\delta^2 Q^{\pi/2}}+\frac{e^a (\log Q)}{\delta^3y^{\delta/14}\log y} \bigg),
\end{align}
moreover
\begin{align}\label{formula:main_p_s=1:Dirichelt}
  &\log L(1,\chi)
  =
  \sum_{p <  (\log Q)^{14/\delta}} \frac{\chi(p)}{p} +C(1,\chi)
  +
  O_{\delta}\bigg(
  \frac{1}{\log\log Q}
  \bigg),
  \end{align}
unless $\chi$ lies in an exceptional set $E(\Delta)$
which is the same as in the statement of Corollary~\ref{cor-1}.
\end{cor}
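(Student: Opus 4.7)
The plan is to deduce Corollary~\ref{cor-2} as a direct application of Corollary~\ref{cor-1.5} (yielding \eqref{cor-2-1}) and Theorem~\ref{thm-2} (yielding \eqref{formula:main_p_s=1:Dirichelt}), with $N=Q$, once $L(s,\chi)$ is fit into the general framework off the exceptional set. I would take $m=1$ and $c_{1}(p)=\chi(p)$ (with $c_{1}(p)=0$ for $p\mid q$), so that $d_{p}(\ell)=\chi(p)^{\ell}=\chi(p^{\ell})$; then the sum $\sum_{p\le y}d_{p}(1)/p^{\alpha}$ and the constant $C(\alpha)$ in the general statements specialize exactly to the character sum and to $C(\alpha,\chi)$ appearing in \eqref{cor-2-1}, and similarly for \eqref{formula:main_p_s=1:Dirichelt}.

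Next I would verify the three assumptions with $N=Q$. Assumption~\ref{conti-analy} is immediate, since for non-principal primitive $\chi$ the function $L(s,\chi)$ is entire; it holds with any $\sigma_{L}\in[1/2,1)$. Assumption~\ref{convex} follows from the classical convexity bound $L(s,\chi)\ll(q(1+|t|))^{(1-\sigma)/2+\varepsilon}$ on $\sigma_{L}<\sigma<4$, which yields suitable $\gamma_{1},\gamma_{2}$ once $N=Q\ge q$.

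The only substantive step is Assumption~\ref{zero-free}. Following Montgomery--Weinberger as recalled in Remark~\ref{rem_MW}, let $E(\delta)$ denote the set of primitive $\chi\pmod q$, $q\le Q$, for which $L(s,\chi)$ has at least one zero in the rectangle $1-\delta/7\le\sigma\le 1$, $|t|\le 2(\log Q)^{3}$; this is precisely the set of characters failing Assumption~\ref{zero-free} with $N=Q$, and therefore coincides with the exceptional set appearing in Corollary~\ref{cor-1}. Applying the large-sieve inequality \eqref{zero-density} with $\sigma=1-\delta/7$ and $T=2(\log Q)^{3}$, exactly as in the derivation of \eqref{estimate_E}, one obtains
\[
\#E(\delta)\ll\bigl(Q^{2}(\log Q)^{3}\bigr)^{3\delta/7}(\log Q)^{9}=Q^{6\delta/7}(\log Q)^{9\delta/7+9}\ll_{\delta}Q^{\delta},
\]
since $Q^{\delta/7}$ dominates any fixed power of $\log Q$.

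With all three assumptions verified for $\chi\notin E(\delta)$, Corollary~\ref{cor-1.5} with $N=Q$ gives \eqref{cor-2-1} at once, and Theorem~\ref{thm-2} with $N=Q$ gives \eqref{formula:main_p_s=1:Dirichelt} (alternatively, specializing $a=0$ and $y=(\log Q)^{14/\delta}$ in \eqref{cor-2-1} recovers the same conclusion after absorbing the lower-order error terms). I do not anticipate a genuine obstacle: the only computation beyond cross-referencing is the large-sieve count above, which differs from \eqref{estimate_E} only through the slightly larger choice of $T$, and the bound $\ll_{\delta}Q^{\delta}$ is preserved.
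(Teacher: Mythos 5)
Your proposal is correct and follows essentially the same route as the paper: the paper's proof of Corollary~\ref{cor-2} is just the one-line ``Similar to that of Corollary~\ref{cor-1},'' which in turn amounts to verifying Assumptions~\ref{conti-analy}, \ref{zero-free}, \ref{convex} with $N=Q$ for $\chi\notin E(\delta)$ and then specializing the relevant general result. You correctly identify that~\eqref{cor-2-1} is Corollary~\ref{cor-1.5} with $N=Q$ under the dictionary $m=1$, $c_1(p)=\chi(p)$ (so $d_p(1)=\chi(p)$ and $C(\alpha)=C(\alpha,\chi)$), and that~\eqref{formula:main_p_s=1:Dirichelt} is Theorem~\ref{thm-2} with $N=Q$ (or, equivalently, the $a=0$, $y=(\log Q)^{14/\delta}$ case of~\eqref{cor-2-1}). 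Your verification of Assumption~\ref{zero-free} is actually slightly more careful than the paper's: you note that the height in~\eqref{zero-density} should be taken as $T=2(\log Q)^3$ to match $D_{\Delta,N}$ rather than $T=(\log Q)^2$ as in Remark~\ref{rem_MW}, and you check that the bound $\#E(\delta)\ll_\delta Q^\delta$ survives because $Q^{\delta/7}$ absorbs any fixed power of $\log Q$; the paper simply cites~\eqref{estimate_E} without remarking on this discrepancy.
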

\begin{proof}
Similar to that of Corollary~\ref{cor-1}.
\end{proof}
\begin{rem}
This corollary should be compared with \eqref{MV}.   The implied constant in
\eqref{MV} depends on $\chi$, but the implied constant in
\eqref{cor-2-1}
depends only on $\delta$.
\end{rem}

In the case of Dirichlet $L$-functions, we may proceed further, because we know more
information on these functions.

For example, let $\chi$ be a primitive character mod $q$, and
let $N(T,\chi)$ be the number of non-trivial zeros $\rho=\beta+i\gamma$ of $L(s,\chi)$
in the region $0\leq \beta \leq 1$ and $|\gamma|\leq T$.   Then it is known that
\begin{align}\label{DL1}
N(T,\chi)=\frac{T}{\pi}\log\frac{qT}{2\pi}-\frac{T}{\pi}+O(\log T+\log q)
\end{align}
for $2\leq T$ (see \cite[Chapter 16]{davenport}).
Define
\[
\psi(x,\chi)
=\sum_{n\leq x}\Lambda(n)\chi(n).
\]
Then
\begin{align}\label{DL2}
\psi(x,\chi)
=\sum_{p\leq x}\chi(p)\log p+ O(\sqrt{x}\log x).
\end{align}
In the course of the proof of the prime number theorem in arithmetic progressions 
(see \cite[Chapter 19]{davenport}), it is shown that
\begin{align}\label{DL3}
\psi(x,\chi)=
-\frac{x^{\beta_1}}{\beta_1}-\sum_{|\gamma|<T}\frac{x^{\rho}}{\rho}+R_3(x,T)
\end{align}
for $2\leq T \leq x$, where $\beta_1$ is (possible) Siegel's zero, 
$\rho$ are other non-trivial zeros of $L(s,\chi)$ (except $1-\beta_1$), and
\begin{align}\label{DL4}
R_3(x,T) \ll \frac{x(\log qx)^2}{T}+x^{1/4}\log x.
\end{align}
Using these formulas, we can obtain the following refinements of our previous results.

\begin{thm}\label{cor:apply_PNT}
 Let $Q\geq 16$ and $7/2>\delta>0$ and $a$ be real with $\delta\log Q > 28a\geq 0$.
 Let $\chi$ be a primitive Dirichlet character of conductor $q\leq Q$.
 We have
  \begin{align}\label{formula_th4_1}
    \log L(1-a/\log Q,\chi)
    = &
    \sum_{p\leq (\log Q)^{14/\Delta}}\frac{\chi(p)}{p^{1-a/\log Q}} +C(1-a/\log Q,\chi)
    \nonumber\\
    &+
    O_{\Delta}\left( 
    \frac{e^a \log\log Q}{\log Q}\right),
  \end{align}
  unless $\chi$ lies in the exceptional set $E(\Delta)$.
  This further yields that
\begin{align}\label{formula_th4_2}
  L(1-a/\log Q,\chi)
  =\bigg(
  1+O_{\Delta}\bigg(\frac{e^a \log\log Q}{\log Q}\bigg)
  \bigg)
  \prod_{p\leq (\log Q)^{14/\Delta}}
  \bigg(1-\frac{\chi(p)}{p^{1-a/\log Q}}\bigg)^{-1}
\end{align}
  unless $\chi \in E(\Delta)$.
\end{thm}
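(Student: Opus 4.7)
The starting point is Corollary~\ref{cor-2} applied with $y = Q$, which is permitted because $Q \le Q^{2\pi/5}$ (as $2\pi/5>1$) and the hypothesis $\delta\log y > 28 a$ coincides with the standing assumption $\delta \log Q > 28 a$. A direct inspection of the four error terms in Corollary~\ref{cor-2} with $y = Q$ shows that the dominant one is $e^a/\log y = e^a/\log Q$, while the term involving $(\log Q)^{14/\delta}$ in the denominator is $\ll (\log Q)^{-2}$, and the term with $Q^{\pi/2}$ is polynomially small by the same bookkeeping as in Remark~\ref{rem:thm3}. This gives, for every $\chi \notin E(\delta)$,
\[
  \log L\!\bigl(1-\tfrac{a}{\log Q},\chi\bigr)
  = \sum_{p\le Q}\frac{\chi(p)}{p^{1-a/\log Q}}
  + C\!\bigl(1-\tfrac{a}{\log Q},\chi\bigr)
  + O_\delta\!\Bigl(\frac{e^a\log\log Q}{\log Q}\Bigr),
\]
so the remaining task is to replace $\sum_{p\le Q}$ by $\sum_{p \le (\log Q)^{14/\delta}}$ at an error of the same size.

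Write $X_0 = (\log Q)^{14/\delta}$ and $\alpha = 1 - a/\log Q$. To estimate the tail $S := \sum_{X_0 < p \le Q}\chi(p)/p^\alpha$, I would apply Abel summation against $\theta(x,\chi)=\sum_{p\le x}\chi(p)\log p$. The essential input is a uniform bound on $\theta(x,\chi)$ for $x \in [X_0, Q]$. Because $\chi \notin E(\delta)$, every non-trivial zero $\rho = \beta + i\gamma$ of $L(s,\chi)$ with $|\gamma| \le 2(\log Q)^3$ has $\beta < 1-\delta/7$, and any Siegel zero $\beta_1$ also satisfies $\beta_1 < 1-\delta/7$. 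Applying the explicit formula~\eqref{DL3} with $T = 2(\log Q)^3$, controlling $\sum_{|\gamma|<T}|\rho|^{-1} \ll \log^2 Q$ via~\eqref{DL1} and $R_3(x,T)$ via~\eqref{DL4}, and using~\eqref{DL2} to pass from $\psi$ to $\theta$, yields
\[
\theta(x,\chi) \ll x^{1-\delta/7}\log^2 Q + \frac{x}{\log Q} + x^{1/4}\log x
\]
uniformly on $[X_0,Q]$. The dominant contribution to the integral piece of the partial summation is the middle term: the substitution $u = \log t$ converts it to $\frac{1}{\log Q}\int_{\log X_0}^{\log Q}e^{au/\log Q}/u\,du \ll e^a \log(\log Q/\log X_0)/\log Q \ll e^a \log\log Q/\log Q$, while the boundary contribution at $Q$ is trivially $O(e^a/\log^2 Q)$. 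The hardest point is the contribution of the first term $x^{1-\delta/7}\log^2 Q$: near the lower endpoint $x \sim X_0$ this bound coincides with the trivial $O(x)$, since $X_0^{-\delta/7}\log^2 Q = 1$, so a crude estimate only gives $O_\delta(1/\log\log Q)$. To bring this down to $O(e^a \log\log Q/\log Q)$, one must integrate by parts once more in the relevant subinterval, exploiting that the factor $x^{-\delta/7}\log^2 Q$ decays sharply as $x$ increases away from $X_0$. Combining all pieces yields $S = O_\delta(e^a\log\log Q/\log Q)$, proving~\eqref{formula_th4_1}.

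For~\eqref{formula_th4_2}, I exponentiate~\eqref{formula_th4_1}. The identity
\[
\sum_{p \le X_0}\frac{\chi(p)}{p^s} + C(s,\chi) = \log L_{p\le X_0}(s,\chi) + \sum_{p>X_0}\sum_{\ell\ge 2}\frac{\chi(p)^\ell}{\ell p^{\ell s}}
\]
with $s = 1-a/\log Q$ shows that the left-hand side of~\eqref{formula_th4_1} differs from $\log L_{p\le X_0}(s,\chi)$ by a correction of size $O(X_0^{-s}) = O((\log Q)^{-14/\delta})$, which is absorbed into the error. Applying $\exp(\cdot)$ and using $e^z = 1 + O(z)$ whenever $z$ is $O(1)$ (which is the only regime in which~\eqref{formula_th4_2} carries content) then produces the stated product form.
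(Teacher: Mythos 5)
Your overall architecture matches the paper's: apply Corollary~\ref{cor-2} with $y=Q$, then use partial summation against $\theta(x,\chi)$ together with the explicit formula \eqref{DL3} (with $T\asymp(\log Q)^3$) to trim the sum from $p\le Q$ down to $p\le(\log Q)^{14/\delta}$, and finally exponentiate via \eqref{sum_over_p} to get the product form. However, there is a genuine gap in the middle of your argument and your proposed repair does not work.

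The gap is in your bound $\sum_{|\gamma|\le T}|\rho|^{-1}\ll\log^2 Q$, which is weaker by a factor of order $\log Q/\log\log Q$ than what is needed. You observe correctly that this leads to a term of size $O(e^a/\log\log Q)$ rather than $O(e^a\log\log Q/\log Q)$, and you propose to close the gap by "integrating by parts once more in the relevant subinterval, exploiting that the factor $x^{-\delta/7}\log^2 Q$ decays sharply." That does not help: once you are reduced to estimating an explicit integral of the form
\[
\log^2 Q\int_{(\log Q)^2}^{Q^{\delta/7}}\frac{du}{u^2\log u},
\]
integration by parts merely reproduces the same $\asymp 1/\big((\log Q)^2\log\log Q\big)$ bound for the integral; the decay of $u^{-2}$ is already fully used in the naive estimate, and there is no hidden cancellation to extract. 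The true value of the integral really is of that order, so the loss of a $\log Q/\log\log Q$ is baked into the input bound on $\theta$, not into how you integrate.

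The correct fix is to sharpen the zero-sum bound itself, as the paper does. With $T=(\log Q)^3$, a partial summation against \eqref{DL1} gives $\sum_{2<|\gamma|\le T}|\gamma|^{-1}\ll\log T+\log q+(\log q)(\log T)+(\log T)^2\ll(\log Q)\log\log Q$, while for $|\gamma|\le 2$ the functional equation and the zero-free region force $|\rho|\ge\delta/7$, so $\sum_{|\gamma|\le 2}|\rho|^{-1}\ll\delta^{-1}\log q$. Together this gives
\[
\theta(x,\chi)\ll x^{1-\delta/7}(\log Q)\Big(\tfrac{1}{\delta}+\log\log Q\Big)+\frac{x(\log Qx)^2}{(\log Q)^3}+\sqrt{x}\log x,
\]
and now the contribution of the first term to the partial summation integral is
\[
e^a(\log Q)\Big(\tfrac{1}{\delta}+\log\log Q\Big)\int_{X_0}^Q\frac{dt}{t^{1+\delta/7}\log t}\ll_\delta\frac{e^a}{\log Q},
\]
which is comfortably within the allowed error. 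The key point you missed is that $T=(\log Q)^3$ makes $\log T\asymp\log\log Q$, not $\asymp\log Q$; you quietly lost this when bounding $\sum|\rho|^{-1}$. Also, a small bookkeeping error: passing from $\psi$ to $\theta$ via \eqref{DL2} introduces a $\sqrt{x}\log x$ term, not $x^{1/4}\log x$ as you wrote (the $x^{1/4}\log x$ is the term already present in $R_3$); this is harmless in the end but should be recorded correctly. Your treatment of \eqref{formula_th4_2} via the identity with $\log L_{p\le X_0}$ and the tail $\sum_{p>X_0}\sum_{\ell\ge 2}$ is fine.
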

\begin{rem}
The error term on the right-hand side of \eqref{formula_th4_2} is better than the
corresponding error estimete in the second formula of Proposition \ref{lem:E}.
We can also see that \eqref{formula_th4_1} essentially implies Proposition
\ref{lem:MW} (see Remark \ref{4implies1}).
\end{rem}
It is possible to obtain a formula, similar to \eqref{formula_th4_1} but for shorter sum,
by appealing the large sieve inequality:
\begin{thm}\label{cor:apply_PNT2}
  Let $0<\delta<2$, 
  and $a$ be real with $\delta\log Q > 28a\geq 0$,
  $28a/(\delta\log Q) <A<14/\delta$.
For almost all primitive characters $\chi$ mod $q\leq Q$
and a real number $a$ with $\delta\log Q > 28a \geq 0$, we have
  \begin{align*}
    &
    \log L(1-a/\log Q, \chi)
    \\
    =&
    \sum_{p\leq (\log Q)^A}\frac{\chi(p)}{p^{1-a/\log Q}} +C(1-a/\log Q,\chi)
 +O_{\Delta}\left(
     \frac{e^a\log\log Q}{\log Q}+\frac{f(Q)}{(\log Q)^{A/2}}\right),
  \end{align*}
  where $f(Q)$ is a function in $Q$ which tends to infinity, satisfies 
  $(\log Q)^{14a/(\delta\log Q)} =o(f(Q))$ and
  $f(Q)=o((\log Q)^{A/2})$, as $Q\to\infty$.
  Especially we have
  \begin{align*}
    &\log L(1,\chi)= \sum_{p\leq (\log Q)^A}\frac{\chi(p)}{p} +C(1,\chi)
 +O_{\Delta}\left(
     \frac{e^a\log\log Q}{\log Q}+\frac{f(Q)}{(\log Q)^{A/2}}\right),
  \end{align*}
  where $f(Q)$ stands for any function in $Q$ which tends to infinity
  and satisfies $f(Q)=o((\log Q)^{A/2})$
  as $Q\to\infty$.
\end{thm}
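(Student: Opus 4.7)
The plan is to combine Theorem~\ref{cor:apply_PNT} with the multiplicative large sieve inequality to replace the prime sum of length $(\log Q)^{14/\delta}$ in \eqref{formula_th4_1} by the shorter one of length $(\log Q)^A$ for almost all primitive characters. For $\chi\notin E(\delta)$, Theorem~\ref{cor:apply_PNT} already supplies the required asymptotic with the long sum, so it suffices to bound
\[
S(\chi)=\sum_{(\log Q)^A<p\leq (\log Q)^{14/\delta}}\frac{\chi(p)}{p^{1-a/\log Q}}
\]
by $O(f(Q)/(\log Q)^{A/2})$ outside a set of density zero among the primitive characters of conductor $\leq Q$.

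The main step is to feed the coefficient sequence $a_p=p^{-1+a/\log Q}$, supported on primes in $((\log Q)^A,(\log Q)^{14/\delta}]$, into the large sieve. For such $p$, the hypothesis $28a\leq \delta\log Q$ gives $p^{2a/\log Q}\leq (\log Q)^{28a/(\delta\log Q)}$, while $\sum_{p>(\log Q)^A}p^{-2}\ll (\log Q)^{-A}$. Since $(\log Q)^{14/\delta}=o(Q^2)$, the large sieve inequality yields
\[
\sum_{q\leq Q}{\sum_{\chi\bmod q}}^{*}|S(\chi)|^2 \ll Q^{2}\,(\log Q)^{28a/(\delta\log Q)-A}.
\]
Chebyshev's inequality then bounds the number of primitive characters with $|S(\chi)|>f(Q)/(\log Q)^{A/2}$ by
\[
\ll \frac{Q^{2}\,(\log Q)^{28a/(\delta\log Q)}}{f(Q)^{2}},
\]
which is $o(Q^{2})$ thanks to the assumption $(\log Q)^{14a/(\delta\log Q)}=o(f(Q))$. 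Adjoining this set to $E(\delta)$, which has size $\ll_{\delta} Q^{\delta}$, still leaves an exceptional set of density zero among primitive characters of conductor $\leq Q$; this furnishes the ``almost all'' in the statement.

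The second (``especially'') formula is the specialization $a=0$, in which case the side condition $(\log Q)^{14a/(\delta\log Q)}=o(f(Q))$ degenerates to $f(Q)\to\infty$, exactly as claimed. The only real bookkeeping to check is that the range $28a/(\delta\log Q)<A<14/\delta$ is precisely what lets the pair of size conditions $(\log Q)^{14a/(\delta\log Q)}=o(f(Q))$ and $f(Q)=o((\log Q)^{A/2})$ coexist, which amounts to noting $28a/(\delta\log Q)<A$. Beyond that there is no serious obstacle: the heart of the argument is the standard second-moment/large-sieve trade-off between the length of the shortened sum and the density of the exceptional set, with Theorem~\ref{cor:apply_PNT} doing the analytic heavy lifting upstream.
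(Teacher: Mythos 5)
Your proposal matches the paper's proof essentially step for step: start from the long-sum asymptotic of Theorem~\ref{cor:apply_PNT} (the paper cites its estimate \eqref{eeeee}), bound the tail $\sum_{(\log Q)^A<p\leq(\log Q)^{14/\delta}}\chi(p)p^{-1+a/\log Q}$ via the second moment and Gallagher's large sieve, apply Chebyshev to extract the exceptional set, and use Jager's count of primitive characters to show $\#(E(\delta)\cup\mathcal{X})=o(Q^2)$ under the stated conditions on $f(Q)$ and $\delta<2$. Your reading of the ``especially'' formula as the case $a=0$, and of the role of the constraint $28a/(\delta\log Q)<A$, is also consistent with the paper.
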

\begin{rem}
We state Theorem \ref{cor:apply_PNT} and Theorem \ref{cor:apply_PNT2} 
(and Corollary \ref{cor-1}, Corollary \ref{cor-2})
only for Dirichlet $L$-functions.
However, to show those results we only use the zero density estimate
\eqref{zero-density} and the explicit formula \eqref{DL3}, \eqref{DL4}.
Therefore it is not difficult to generalize those results to a more
general class of $L$-functions, for which such kind of zero-density estimate
and explicit formula can be proved (see, for instance, Perelli \cite{Pe}).
\end{rem}

The rest of the present paper is devoted to the proof of the above theorems.
After the preparations in the next two sections, we will prove
Theorems \ref{thm-1} and \ref{thm-2} in Section \ref{sec-6}, 
Theorem \ref{thm:main_p_s=1} in Section \ref{sec-7}, and Theorems
\ref{cor:apply_PNT} and \ref{cor:apply_PNT2} in Section \ref{sec-8}.

\section{Estimations of logarithmic derivatives}\label{sec-4}
\par
Now we start the proof of theorems.    First of all, in this section we prepare some
upper bound estimates of $(L'/L)(s)$ and 
$(L'_{p\leq (\log N)^A}/L_{p\leq (\log N)^A})(s)$.
\par
Let $7/2>\delta>0$, $N\geq 16$.   For $t_0 \in\mathbb{R}$,
define $s_0\in D_{\delta,N}$ by 
\[
s_0 =
\begin{cases}
  2+it_0 & |t_0|\leq \log N \\
  2+\Delta/7+it_0 & |t_0|> \log N, 
\end{cases}
\]
and let $r=1+\Delta/7$.
We denote the domain
\[
D_{\Delta, N}^0= \bigcup_{t_0\in\mathbb{R}}\{s\in\mathbb{C} \mid |s-s_0|<r\}.
\]
We see that $D_{\Delta, N}^0 \subset D_{\Delta, N}$.
In $D_{\Delta, N}^0$, $\log L(s)$ is a holomorphic function
because of Assumption~\ref{zero-free}.
\par
For $|s-s_0|<r$, since $\sigma_L<1-\Delta/7< \sigma <3+2\Delta/7$,
we find that
\begin{align*}
  \Re(\log L(s) -\log L(s_0))
  =&
  \log |L(s)| -\log |L(s_0)|
  \\
  \ll&
  \log (N^{\gamma_1}(1+|t|)^{\gamma_2}) + \log (N^{\gamma_1}(1+|t_0|)^{\gamma_2})
  \\
  \ll&
  \log N + \log(1+|t|) + \log (1+|t|+r)
  \\
  \ll&
  \log (N(1+|t|))
\end{align*}
from Assumption~\ref{convex}.
By Lemma~2 in Duke \cite{duke} (which is Lemma~4 in Littlewood \cite{L}), 
there exists an positive constant $A$ depending on $\gamma_1$ and $\gamma_2$
so that for $|s-s_0|\leq r_1 < r$ we have
\[
\left|\frac{L'(s)}{L(s)}\right|
<
\frac{Ar\log (N(1+|t|))}{(r-r_1)^2}
\ll
\frac{\log (N(1+|t|))}{(r-r_1)^2}.
\]
Choosing 
$r_1=1+\delta/14$, we have
\begin{equation}\label{diff_logL}
  \left|\frac{L'(s)}{L(s)}\right| \ll \frac{\log (N(1+|t|))}{\delta^2}
\end{equation}
for all $s\in D_{\delta, N}^1$, where
\[
D_{\delta, N}^1
=
\bigcup_{t_0\in\mathbb{R}}\{s\in\mathbb{C} \mid |s-s_0| \leq r_1\}
\subset D_{\Delta, N}^0.
\]
\par
Next we consider the logarithmic derivative of $L_{p\leq (\log N)^A} (s)$.
For $s\in D_{\delta, N}^1$,
we know that $\log L_{p\leq (\log N)^A}(s)$ is holomorphic and
\begin{align*}
  \log L_{p\leq (\log N)^A} (s)
  =
  -\sum_{p\leq (\log N)^A}\sum_{j=1}^{m}\log (1-c_j(p)p^{-s})&
  \\
  =
  \sum_{p\leq (\log N)^A}\sum_{j=1}^{m}\sum_{\ell=1}^{\infty}
  \frac{c_j^{\ell}(p)}{\ell p^{\ell s}}
  =
  \sum_{p\leq (\log N)^A}\sum_{\ell=1}^{\infty}\frac{d_p(\ell)}{\ell p^{\ell s}}.&
  \\
\end{align*}
Therefore, using $d_p(\ell)\ll 1$ and $\sigma\geq 1-\delta/14>1/2$
for $s\in D_{\delta, N}^1$, we have
\begin{align*}
 \lefteqn{ \frac{L'_{p\leq (\log N)^A}}{L_{p\leq (\log N)^A}}(s)
  =
  -
  \sum_{p\leq (\log N)^A}\sum_{\ell=1}^{\infty}\frac{d_p(\ell)\log p}{p^{\ell s}}}
  \nonumber\\
 & \ll
  \sum_{p\leq (\log N)^A}\frac{\log p}{p^{\sigma}}
  +
  \sum_{p\leq (\log N)^A}\frac{\log p}{p^{2\sigma}}
  +
  \sum_{p\leq (\log N)^A}\sum_{\ell=3}^{\infty}\frac{\log p}{p^{\ell\sigma}}
  \nonumber\\
 & \ll
  \sum_{p\leq (\log N)^A}\frac{\log p}{p^{\sigma}}
  +
  \sum_{p\leq (\log N)^A}\frac{\log p}{p^{3\sigma}}.
  \nonumber
\end{align*}
Therefore we obtain
\begin{align}\label{diff_logpartL}
\lefteqn{ \frac{L'_{p\leq (\log N)^A}}{L_{p\leq (\log N)^A}}(s)}\nonumber\\
   &\ll
  \begin{cases}
    \min\{ (\log N)^{A(1-\sigma)}/(1-\sigma),\; (\log N)^{A(1-\sigma)}(A\log\log N)\} & 
    1> \sigma \geq 1-\delta/14 \\
    \displaystyle A\log\log N & \sigma=1 \\
    1 & \sigma > 1
  \end{cases}
\end{align}
for any $s\in D_{\delta, N}^1$,
because for $\sigma<1$ we know
\begin{align*}
\sum_{p\leq (\log N)^A}\frac{\log p}{p^{\sigma}}
&=
\frac{1}{(\log N)^{A\sigma}}\sum_{p\leq (\log N)^A} \log p
+\sigma \int_2^{(\log N)^A}t^{-\sigma-1}O(t) dt\\
&\ll
\frac{(\log N)^{A(1-\sigma)}}{1-\sigma}
\end{align*}
by the fact $\sum_{p\leq x}\log p \ll x$, 
and
\begin{align*}
\sum_{p\leq (\log N)^A}\frac{\log p}{p^{\sigma}}
=&
(\log N)^{A(1-\sigma)}\sum_{p\leq (\log N)^A}\frac{\log p}{p}
-(1-\sigma)\int_2^{(\log N)^A} \!\!\! t^{-\sigma} O(\log t) dt
\\
\ll&
(\log N)^{A(1-\sigma)}(A\log\log N)
\end{align*}
by the fact
\[
\sum_{p\leq x}\frac{\log p}{p} \ll \log x.
\]
\section{An auxiliary formula for $\log F(\alpha)$}\label{sec-5}
\par
Recall
\[
\log F (s)
  =
  \log L(s)-\log L_{p\leq (\log N)^A}(s).
\]
The aim of this section is to prove the following formula.

\begin{lemm}\label{aux}
Let $1-\delta/28< \alpha < 2-\delta/7$ and $\beta=1-\delta/14$ (hence $\beta<\alpha$).
For $v>1$ and $0< A \leq B$, we have
\begin{align}\label{L_alpha}
  &
  -\sum_{(\log N)^B \geq p>(\log N)^A} \frac{d_p(1) e^{-p/v}}{p^{\alpha}}
  +
  \log F(\alpha)
  \nonumber\\
  \ll&
 \frac{v}{(\alpha B\log\log N)(\log N)^{\alpha B}}
  +
  \frac{(\log N)^{A\delta/7}}{(A\log\log N)(\log N)^A}
  \nonumber\\
  &+
\bigg(\frac{\log N}{\delta^2}+A(\log N)^{A\delta/14}
  +\frac{(\log N)^{A\delta/14}}{\delta}\bigg)
 \bigg(\frac{v^{1+\delta/14}(\log N)^{\delta/14+1/2}}{N^{\pi/2}}
+
\frac{v^{\beta-\alpha}}{\delta \log v}
\bigg)
\end{align}
\end{lemm}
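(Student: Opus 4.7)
The plan is to adapt Duke's Mellin-transform argument from \cite[Lemma~3]{duke} using the logarithmic-derivative bounds prepared in Section~\ref{sec-4}. Beginning with the classical identity $\frac{1}{2\pi i}\int_{(c)}\Gamma(s)x^{-s}\,ds = e^{-x}$ applied termwise to the Dirichlet coefficients of $\log F$, I would write, for $c$ large (say $c=2$),
\[
\frac{1}{2\pi i}\int_{(c)} \log F(\alpha+s)\, v^s\, \Gamma(s)\, ds \;=\; \sum_{p > (\log N)^A}\sum_{\ell=1}^\infty \frac{d_p(\ell)\, e^{-p^\ell/v}}{\ell\, p^{\ell\alpha}}.
\]
Then I would shift the contour leftward to the vertical line $\Re s = \beta - \alpha \in (-1,0)$. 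By Assumption~\ref{zero-free}, $\log F$ is holomorphic in the swept rectangle, so the only singularity crossed is the simple pole of $\Gamma(s)$ at $s=0$, whose residue is $\log F(\alpha)$. This identifies $\log F(\alpha)$ with the smoothed prime sum above minus the shifted integral, and reduces the proof to bounding (a) the shifted integral, (b) the contribution of primes $p > (\log N)^B$ with $\ell = 1$, and (c) the $\ell \geq 2$ terms.

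For (a), I would truncate the shifted integral at height $|\Im s| = \log N$. The horizontal pieces are controlled by $|\Gamma(\sigma+it)| \ll |t|^{\sigma-1/2}e^{-\pi |t|/2}$ together with a crude bound for $\log F$ throughout the rectangle, supplying the factor $v^{1+\delta/14}(\log N)^{\delta/14+1/2}/N^{\pi/2}$. On the truncated vertical segment I would bound $\log F(\beta+it)$ via
\[
\log F(\beta+it) \;=\; \log F(2+it) \;-\; \int_\beta^2 \frac{F'}{F}(\sigma+it)\, d\sigma,
\]
using $\log F(2+it) = O(1)$ from absolute convergence, \eqref{diff_logL} for the $L'/L$-part (contributing $\log N/\delta^2$), and \eqref{diff_logpartL} for the partial-Euler-product part. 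The latter must be split at $\sigma = 1$ and, within $[\beta,1]$, split again between the two alternatives $(\log N)^{A(1-\sigma)}/(1-\sigma)$ and $(\log N)^{A(1-\sigma)}(A\log\log N)$, each being sharper on a different subinterval. The resulting $\sigma$-integration produces the three-term factor $\log N/\delta^2 + A(\log N)^{A\delta/14} + (\log N)^{A\delta/14}/\delta$. Combining with $v^{\beta-\alpha}$, the gamma integral over $|t|\leq\log N$, and a factor $1/\log v$ gained through integration by parts against the oscillatory $v^{it}$, yields the $v^{\beta-\alpha}/(\delta\log v)$ piece of the error.

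For (b), the tail $\sum_{p>(\log N)^B} d_p(1)e^{-p/v}/p^\alpha$ is bounded using $|d_p(1)|\leq m$, the estimate $\sum_{p>y}e^{-p/v}\ll v/\log y$ (from PNT and $\int_y^\infty e^{-x/v}\,dx/\log x \leq v/\log y$), and the trivial bound $p^{-\alpha}\leq (\log N)^{-\alpha B}$; this yields $v/((\alpha B\log\log N)(\log N)^{\alpha B})$. For (c), the absolute estimate $m\sum_{p>(\log N)^A} p^{-2\alpha}$, together with $2\alpha > 2 - \delta/7$ and PNT, produces $(\log N)^{A\delta/7}/((A\log\log N)(\log N)^A)$. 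Summing (a), (b), (c) yields the claimed bound. The main obstacle will be the careful bookkeeping in (a): the subinterval partition of $[\beta,2]$ on which \eqref{diff_logpartL} is applied must be chosen to optimize across its cases so that the resulting $\log F$ bound takes exactly the three-term form displayed, and the oscillatory gain from $v^{it}$ must be correctly extracted to produce the sharp $1/\log v$ factor rather than a coarser $O(1)$ estimate.
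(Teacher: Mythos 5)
Your outline is a genuinely different route from the paper's. The paper applies Mellin inversion not to $\log F$ but to $F'/F$ (so the residue at $z=0$ is $\frac{F'}{F}(u)$, not $\log F(\alpha)$), and only afterwards passes to $\log F$ by integrating the whole identity over the auxiliary variable $u$ from $\alpha$ to $2-\delta/7$. You instead apply Mellin directly to $\log F(\alpha+s)$, pick up $\log F(\alpha)$ as the residue, and control $\log F$ on the shifted line by $\log F(\beta+it)=\log F(2+it)-\int_\beta^2\frac{F'}{F}(\sigma+it)\,d\sigma$, using the bounds \eqref{diff_logL} and \eqref{diff_logpartL}. Both decompositions are sound, and your sub-estimates for the tails $p>(\log N)^B$ and $\ell\ge 2$, the horizontal pieces, and the balancing of the two alternatives in \eqref{diff_logpartL} are all in the right shape.

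The substantive difference is where the crucial factor $1/\log v$ comes from, and this is also where your argument is thinnest. In the paper it appears essentially for free: after the contour shift one has the bound $I_2\ll(\,\cdot\,)v^{\beta-u}/\delta$ at each fixed $u$, and the subsequent integration $\int_\alpha^{2-\delta/7}v^{\beta-u}\,du\ll v^{\beta-\alpha}/\log v$ produces the $1/\log v$ by an elementary exponential integral. In your scheme the Mellin variable is the only variable available, so the only source of $1/\log v$ is the oscillation of $v^{s}=v^{\beta-\alpha}e^{it\log v}$ on the vertical line, to be exploited by integration by parts in $t$. This is more delicate than your phrase ``the oscillatory gain from $v^{it}$ must be correctly extracted'' suggests: once you differentiate the amplitude $\log F(\beta+it)\,\Gamma(\beta-\alpha+it)$ in $t$, the term with $\Gamma'$ produces a double pole at $z=0$ (and one near $z=-1$), so $\int|\Gamma'(\beta-\alpha+it)|\,dt$ must be re-estimated (it is $\ll 1/\delta$ because $\beta-\alpha$ is kept at distance $\gg\delta$ from $0$ and $-1$), and the term with $\frac{F'}{F}(\beta+it)$ re-introduces exactly the logarithmic-derivative bounds the paper works with throughout. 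Without this integration by parts you only get $(\,\cdot\,)v^{\beta-\alpha}/\delta$, missing the $1/\log v$, and that loss is not cosmetic: in the deduction of Theorem~\ref{thm-1} one has $v^{\beta-\alpha}\asymp 1$ after the optimal choice of $v$, so the $1/\log v\asymp 1/\log\log N$ factor is the entire content of the estimate. So your route can be made to work, but the ``main obstacle'' you name at the end is the heart of the argument rather than bookkeeping, and the paper's $u$-integration is a noticeably cheaper way of achieving the same effect.
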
 
\begin{proof}
Let $v>1$, 
and $\alpha \leq u < 2-\delta/7$.
Mellin's formula
\[
e^{-w}=\frac{1}{2\pi i}\int_{(1+\delta/14)}w^{-z}\Gamma(z)dz
\]
(where the path of integration is the vertical line $\Re z=1+\delta/14$)
yields that
\begin{align*}
  &
  -\sum_{p>(\log N)^A}\sum_{\ell=1}^{\infty}\frac{d_p(\ell)\log p}{p^{\ell u}}e^{-p^{\ell}/v}
  \\
  =&
  -\frac{1}{2\pi i}\int_{(1+\delta/14)}
  \sum_{p>(\log N)^A}\sum_{\ell=1}^{\infty}\frac{d_p(\ell)\log p}{p^{\ell (u+z)}}
  v^z\Gamma(z)dz
  \\
  =&
  \frac{1}{2\pi i}\int_{(1+\delta/14)}
  \frac{F'}{F}(u+z)
  v^z\Gamma(z)dz,
\end{align*}
where $A>0$.
Noting 
\begin{align}\label{futousiki}
0>-\delta/28 > \beta-u > -1+\delta/14 >-1,
\end{align}
we shift the path of integration to $\Re z=\beta-u$ and apply
the residue theorem to obtain
\begin{align}\label{shifting}
  &
  -\sum_{p>(\log N)^A}\sum_{\ell=1}^{\infty}
  \frac{d_p(\ell)\log p}{p^{\ell u}}e^{-p^{\ell}/v}\notag
  \\
  =&
  \frac{F'}{F}(u)
  +
  \frac{1}{2\pi i}\int_{\substack{\beta-u\leq x \leq 1+\delta/14\\|y|=\log N}}
  \frac{F'}{F}(u+z)v^z\Gamma(z)dz\notag
  \\
  &
  +
  \frac{1}{2\pi i}\int_{\substack{x=\beta-u\\|y|\leq \log N}}
  \frac{F'}{F}(u+z)v^z\Gamma(z)dz
  +
  \frac{1}{2\pi i}\int_{\substack{x = 1+\delta/14\\|y|\geq \log N}}
  \frac{F'}{F}(u+z)v^z\Gamma(z)dz,
\end{align}
where $z=x+iy$, because by Assumption \ref{zero-free} the only relevant pole is
$z=0$.
We write the right-hand side as
$$
  \frac{F'}{F}(u)+I_1+I_2+I_3.
$$
We remark that $z+u\in D_{\delta, N}^1$,
when $z$ is on the above path of integration.
Hence we can estimate the above integrals by
\eqref{diff_logL} and \eqref{diff_logpartL}.
%
%
We see 
that
\begin{align*}
  I_1
  \ll 
  \bigg(\frac{\log N}{\delta^2} +(\log N)^{A\delta/14}(A\log\log N)\bigg)
  \int_{\beta-u\leq x \leq 1+\delta/14} v^x|\Gamma(x\pm i\log N)|dx.
\end{align*}
The estimate
\begin{equation}\label{sterling_large_t}
  \Gamma(s) \ll |t|^{\sigma-1/2}e^{-\pi|t|/2}
\end{equation}
in the region $|\sigma|<c$ and $|t|\geq 1$ ($c$ is an absolute constant)
is well-known.
Applying \eqref{sterling_large_t} we can estimate the last integral as
\begin{align*}
  \ll&
  \int_{\beta-u\leq x \leq 1+\delta/14} v^x(\log N)^{x-1/2}e^{-\pi(\log N)/2} dx
  \\
  \ll&
  N^{-\pi/2}(\log N)^{-1/2}
  \bigg[\frac{v^x(\log N)^x}{\log v+\log\log N}\bigg]_{\beta-u}^{1+\delta/14}
  \\
  \ll&
  N^{-\pi/2}(\log N)^{-1/2}
  \frac{v^{1+\delta/14}(\log N)^{1+\delta/14}}{\log v+\log\log N},
\end{align*}
because 
$v\log N >1$.
Therefore we obtain that
\begin{equation}\label{error_int1}
  I_1
  \ll 
  \bigg(\frac{\log N}{\delta^2} +(\log N)^{A\delta/14}(A\log\log N)\bigg)
  \frac{v^{1+\delta/14}(\log N)^{1/2+\delta/14}}{N^{\pi/2}(\log v+\log\log N)}.
\end{equation}
Next, from \eqref{diff_logL}, \eqref{diff_logpartL}
and \eqref{sterling_large_t},
we see that
\begin{align}\label{error_int2}
  I_2
  =&
  \frac{1}{2\pi i}\int_{\substack{x=\beta-u\\|y|\leq \log N}}
  \frac{F'}{F}(u+z)v^z\Gamma(z)dz
  \nonumber\\
  \ll &
  \int_{\substack{x=\beta-u\\|y|\leq \log N}}
  \bigg(\frac{\log N}{\delta^2}+\frac{(\log N)^{A\delta/14}}{\delta}\bigg)
  v^{\beta-u}|\Gamma(z)|dz
  \nonumber\\
  \ll &
  \bigg(\frac{\log N}{\delta^2}+\frac{(\log N)^{A\delta/14}}{\delta}\bigg)
  v^{\beta-u}
  \nonumber\\
  &\times
  \bigg(\int_{\substack{x=\beta-u\\|y|\leq 1}}
  |\Gamma(z)| dy
  +
  \int_{1 \leq |y|\leq \log N}
  |y|^{\beta-u-1/2}e^{-\pi|y|/2}dy
  \bigg)
  \nonumber\\
  \ll &
  \bigg(\frac{\log N}{\delta^2}+\frac{(\log N)^{A\delta/14}}{\delta}\bigg)
  v^{\beta-u}
  \left(\frac{1}{\delta}+1\right)
  \nonumber\\
  \ll &
  \bigg(\frac{\log N}{\delta^2}+\frac{(\log N)^{A\delta/14}}{\delta}\bigg)
  \frac{v^{\beta-u}}{\delta},
\end{align}
since $-\delta/28 > \beta-u > -1+\delta/14$.
Moreover, by 
\eqref{sterling_large_t}, we see that
\begin{align}\label{error_int3}
  I_3
  =&
  \frac{1}{2\pi i}\int_{\substack{x = 1+\delta/14\\|y|\geq \log N}}
  \frac{F'}{F}(u+z)v^z\Gamma(z)dz
  \nonumber\\
  \ll &
  \int_{\substack{x = 1+\delta/14 \\y\geq \log N}}
  v^{1+\delta/14}y^{1/2+\delta/14}e^{-\pi y/2}dy
  \nonumber\\
  \ll&
  v^{1+\delta/14}
  \int_{y\geq \log N} ye^{-\pi y/2}dy
  \nonumber\\
  \ll &
  \frac{(\log N)v^{1+\delta/14}}{N^{\pi/2}}.
\end{align}
From \eqref{error_int1}, \eqref{error_int2} and \eqref{error_int3},
we obtain that
\begin{align*}
  &
  -\sum_{p>(\log N)^A}\sum_{\ell=1}^{\infty}
  \frac{d_p(\ell)\log p}{p^{\ell u}}e^{-p^{\ell}/v}
  \\
  =&
  \frac{F'}{F}(u)
  +
  O\bigg(
  \bigg(\frac{\log N}{\delta^2}+(\log N)^{A\delta/14}(A\log\log N)\bigg)
  \frac{v^{1+\delta/14}(\log N)^{1/2+\delta/14}}{N^{\pi/2}(\log v +\log\log N)}
  \bigg)
  \\
  &+
  O\bigg(
  \bigg(\frac{\log N}{\delta^2} + \frac{(\log N)^{A\delta/14}}{\delta}\bigg)
  \frac{v^{\beta-u}}{\delta}
  +
  \frac{v^{1+\delta/14}\log N}{ N^{\pi/2}}
  \bigg)
  \\
  =&
  \frac{F'}{F}(u)
  +
  O\bigg(
  \frac{Av^{1+\delta/14}(\log N)^{1/2+\delta/14+A\delta/14}}{N^{\pi/2}}
  \bigg)
  \\
  &+
  O\bigg(
  \bigg(\frac{\log N}{\delta^2} + \frac{(\log N)^{A\delta/14}}{\delta}\bigg)
  \frac{v^{\beta-u}}{\delta}
  +
  \frac{v^{1+\delta/14}(\log N)^{3/2+\delta/14}}{\delta^2 N^{\pi/2}}
  \bigg).
\end{align*}
\par
We integrate the both sides with respect to $u$ from $\alpha$ to $2-\delta/7$ to obtain
\begin{align*}
  &
  -\sum_{p>(\log N)^A}\sum_{\ell=1}^{\infty}d_p(\ell)(\log p) e^{-p^{\ell}/v}
  \int_{\alpha}^{2-\delta/7}p^{-\ell u}du
  \\
  =&
  \int_{\alpha}^{2-\delta/7}\frac{F'}{F}(u)du
  +
  O\bigg(
  \frac{Av^{1+\delta/14}(\log N)^{1/2+\delta/14+A\delta/14}}{N^{\pi/2}}
  \bigg)
  \\
  &+
  O\bigg(
  \bigg(\frac{\log N}{\delta^2} + \frac{(\log N)^{A\delta/14}}{\delta}\bigg)
  \frac{v^{\beta}}{\delta}\int_{\alpha}^{2-\delta/7} v^{-u} du
  +
  \frac{v^{1+\delta/14}(\log N)^{3/2+\delta/14}}{ N^{\pi/2}}
  \bigg).
\end{align*}
The right-hand side is
\begin{align*}
  =&
   \log F(2-\delta/7) -\log F(\alpha)
  \\
  &+
  O\bigg(
    \frac{Av^{1+\delta/14}(\log N)^{1/2+\delta/14+A\delta/14}}{N^{\pi/2}}
    + \frac{v^{1+\delta/14}(\log N)^{3/2+\delta/14}}{N^{\pi/2}}
    \bigg)
  \\
   & +
  O\bigg(
  \bigg(\frac{\log N}{\delta^2} + \frac{(\log N)^{A\delta/14}}{\delta}\bigg)
  \frac{v^{\beta}}{\delta}
  \bigg[\frac{-v^{-u}}{\log v}\bigg]_{\alpha}^{2-\delta/7}
  \bigg)
  \\
  =&
  \log F(2-\delta/7) -\log F(\alpha)
  \\
  &+
  O\bigg(
    \bigg(A(\log N)^{A\delta/14}
    + \log N
    \bigg)\frac{v^{1+\delta/14}(\log N)^{\delta/14+1/2}}{N^{\pi/2}}
    \bigg)
    \\
    & +
    O\bigg(
  \bigg(\frac{\log N}{\delta^2} + \frac{(\log N)^{A\delta/14}}{\delta}\bigg)
  \frac{v^{\beta-\alpha}}{\delta\log v}
  \bigg),
\end{align*}
and hence we obtain
\begin{align}\label{after_int_over_u}
  &
  -\sum_{p>(\log N)^A}\sum_{\ell=1}^{\infty}d_p(\ell)(\log p) e^{-p^{\ell}/v}
  \bigg(
  \frac{-1}{\ell\log p}p^{-\ell (2-\delta/7)}
  +
  \frac{1}{\ell\log p}p^{-\ell \alpha}
  \bigg)
  \nonumber\\
    =&
    \log F(2-\delta/7) -\log F(\alpha)
    \nonumber\\
    &
    +
  O\bigg(\bigg(\frac{\log N}{\delta^2}+A(\log N)^{A\delta/14}
  +\frac{(\log N)^{A\delta/14}}{\delta}\bigg)\nonumber\\
 &\qquad\qquad\times\bigg(\frac{v^{1+\delta/14}(\log N)^{\delta/14+1/2}}{N^{\pi/2}}
+
\frac{v^{\beta-\alpha}}{\delta \log v}
\bigg)
    \bigg).
\end{align}
The left-hand side of \eqref{after_int_over_u} is
\begin{align*}
  =&
  \sum_{p>(\log N)^A}\sum_{\ell=1}^{\infty}
  \frac{d_p(\ell)e^{-p^{\ell}/v}}{\ell p^{\ell (2-\delta/7)}}
  -
  \sum_{p>(\log N)^A}\sum_{\ell=1}^{\infty}
  \frac{d_p(\ell) e^{-p^{\ell}/v}}{\ell p^{\ell \alpha}}
  \\
  =&
  O\bigg(
  \sum_{p>(\log N)^A}\sum_{\ell=1}^{\infty}\frac{1}{p^{\ell(2-\delta/7)}}
  \bigg)
  \\
  &
  -\sum_{p>(\log N)^A}\frac{d_p(1) e^{-p/v}}{p^{\alpha}}
  +
  O\bigg(
  \sum_{p>(\log N)^A}\sum_{\ell=2}^{\infty} \frac{1}{p^{\ell \alpha}}
  \bigg)
  \\
  =&
  O\bigg(
  \sum_{p>(\log N)^A}\frac{1}{p^{(2-\delta/7)}}
  \bigg)
  -\sum_{p>(\log N)^A}\frac{d_p(1) e^{-p/v}}{p^{\alpha}}
  +
  O\bigg(
  \sum_{p>(\log N)^A}\frac{1}{p^{2\alpha}}
  \bigg)
  \\
  =&
  O\bigg(
  \sum_{p>(\log N)^A}\frac{1}{p^{(2-\delta/7)}}
  \bigg)
  -\sum_{p>(\log N)^A}\frac{d_p(1) e^{-p/v}}{p^{\alpha}}.
\end{align*}
Since $2-\delta/7\geq 3/2$, by partial summation we see that
\begin{align}\label{2-2delta/7}
  &
  \sum_{p>(\log N)^A}\frac{1}{p^{(2-\delta/7)}}
  \ll 
  \int_{(\log N)^A}^{\infty} t^{(-3+\delta/7)}\frac{t}{\log t} dt
  \nonumber
  \\
  &
  \ll
  \frac{1}{A\log\log N}
  \int_{(\log N)^A}^{\infty} t^{-2+\delta/7} dt
  \ll
  \frac{(\log N)^{A\delta/7}}{(A\log\log N)(\log N)^A}.
\end{align}
Therefore, the left-hand side of \eqref{after_int_over_u} is
\begin{align*}
  &
  =-\sum_{p>(\log N)^A} \frac{d_p(1) e^{-p/v}}{p^{\alpha}}+
  O\bigg(\frac{(\log N)^{A\delta/7}}{(A\log\log N)(\log N)^A}\bigg).
\end{align*}
Further, from \eqref{2-2delta/7}, we see that
\begin{align*}
  \log F(2-\delta/7)
  =&
  \sum_{p>(\log N)^A}\sum_{\ell=1}^{\infty}\frac{d_p(\ell)}{\ell p^{\ell (2-\delta/7)}}
  \ll
  \sum_{p>(\log N)^A}\sum_{\ell=1}^{\infty}\frac{1}{\ell p^{\ell(2-\delta/7) }}
  \\
  \ll&
  \frac{(\log N)^{A\delta/7}}{(A\log\log N)(\log N)^A}.
\end{align*}
Substituting these results into \eqref{after_int_over_u}, we obtain
\begin{align}\label{tsuika}
  &
  -\sum_{p>(\log N)^A} \frac{d_p(1) e^{-p/v}}{p^{\alpha}}
  +
  \log F(\alpha)
  \nonumber\\
  \ll&
  \frac{(\log N)^{A\delta/7}}{(A\log\log N)(\log N)^A}
  \nonumber\\
  &+
\bigg(\frac{\log N}{\delta^2}+A(\log N)^{A\delta/14}
  +\frac{(\log N)^{A\delta/14}}{\delta}\bigg)
 \bigg(\frac{v^{1+\delta/14}(\log N)^{\delta/14+1/2}}{N^{\pi/2}}
+
\frac{v^{\beta-\alpha}}{\delta \log v}
\bigg).
\end{align}
Lastly, for $B\geq A$, since $e^{-p/v}\ll v/p$, again by partial summation we have
\begin{align*}
  &
  \sum_{p> (\log N)^B}
  \frac{d_p(1) e^{-p/v}}{p^{\alpha}}
  \ll
  \sum_{p\geq (\log N)^B}
  \frac{v}{p^{1+\alpha}}
  \ll 
  v\int_{(\log N)^B}^{\infty} \frac{t^{-1-\alpha}}{\log t} dt
  \\
  \ll &
  \frac{v}{B\log\log N}\int_{(\log N)^B}^{\infty} t^{-1-\alpha} dt
  \ll
  \frac{v}{(\alpha B\log\log N)(\log N)^{\alpha B}}.
\end{align*}
Inserting this estimate into \eqref{tsuika},
we arrive at the assertion of Lemma \ref{aux}.
\end{proof}
\begin{rem}\label{rem:2_ika}
If we want to extend the above lemma to the case $\alpha\geq 2$, then the inequality
\eqref{futousiki} is no longer valid, and hence, there appear more terms coming from
other poles on the right-hand side of \eqref{shifting}.   Therefore, in the case
$\alpha\geq 2$, the form of the formula stated in Lemma \ref{aux} is to be modified.
\end{rem}
\section{Proof of Theorem \ref{thm-1} and Theorem \ref{thm-2}}\label{sec-6}
%
\par
We first give one more auxiliary formula.
Define
\[
C(\alpha):=
\sum_p\sum_{\ell=2}^{\infty}\frac{d_p(\ell)}{\ell p^{\ell\alpha}}.
\]
Note that
\[
C(\alpha)\ll
\sum_p\sum_{\ell=2}^{\infty}\frac{1}{p^{\ell\alpha}}
\ll
\sum_p \frac{1}{p^{2\alpha}} \ll 1.
\]
For $7/8<1-\delta/28<\alpha < 2-\delta/7$, we have
\begin{align}\label{sum_over_p}
  &
  \log L_{p\leq (\log N)^{14/\delta}} (\alpha)
  \nonumber\\
  =&
  \sum_{p\leq (\log N)^{14/\delta}}\sum_{\ell=1}^{\infty}\frac{d_p(\ell)}{\ell p^{\ell\alpha}}
  \nonumber\\
  =&
  \sum_{p\leq (\log N)^{14/\delta}}\frac{d_p(1)}{p^{\alpha}}
  +
  \sum_{p\leq (\log N)^{14/\delta}}\sum_{\ell=2}^{\infty}\frac{d_p(\ell)}{\ell p^{\ell\alpha}}
  \nonumber\\
  =&
  \sum_{p\leq (\log N)^{14/\delta}}\frac{d_p(1)}{p^{\alpha}}
  +
  C(\alpha)
  -
  \sum_{p> (\log N)^{14/\delta}}\sum_{\ell=2}^{\infty}\frac{d_p(\ell)}{\ell p^{\ell\alpha}}
  \nonumber\\
  =&
  \sum_{p\leq (\log N)^{14/\delta}}\frac{d_p(1)}{p^{\alpha}}
  +
  C(\alpha)
  +
  O(
  \sum_{p> (\log N)^{14/\delta}}\sum_{\ell=2}^{\infty}\frac{1}{p^{\ell\alpha}}
  )
  \nonumber\\
  =&
  \sum_{p\leq (\log N)^{14/\delta}}\frac{d_p(1)}{p^{\alpha}}
  +
  C(\alpha)
  +
  O(
  \sum_{p> (\log N)^{14/\delta}}\frac{1}{p^{2\alpha}}
  )
  \nonumber\\
  =&
  \sum_{p\leq (\log N)^{14/\delta}}\frac{d_p(1)}{p^{\alpha}}
  +
  C(\alpha)
  +
  O\bigg(
  \frac{1}{(\log\log N)(\log N)^{14(2\alpha-1)/\delta}}
  \bigg).
\end{align}

Now we start the proof of Theorems \ref{thm-1} and \ref{thm-2}.
Hereafter in this section we restrict ourselves to the case
$1-\delta/28<\alpha\leq 1$, because 
from the arithmetic viewpoint, the case $\alpha\leq 1$ is more interesting.
\begin{proof}[Proof of Theorem \ref{thm-1}]
  We use Lemma \ref{aux} with $B=A$, and we put $M=\max\{1, A\delta/14\}$.
  We have
  \begin{align*}
  \log F(\alpha)
  \ll&
  \frac{v}{(A\log\log N)(\log N)^{\alpha A}}
  +
  \frac{(\log N)^{A\delta/7}}{(A\log\log N)(\log N)^A}
  \nonumber\\
  &+
\bigg(\frac{1}{\delta^2}+A\bigg)
(\log N)^M
 \bigg(\frac{v^{1+\delta/14}(\log N)^{\delta/14+1/2}}{N^{\pi/2}}
+
\frac{v^{\beta-\alpha}}{\delta \log v}
\bigg).
\end{align*}
Setting $v=(\log N)^{(M+\alpha A)/(1+\alpha-\beta)}$
%
(where $\beta=1-\delta/14$)
we have
\begin{align*}
  &
  \log F(\alpha)=
  \log L(\alpha)
  -
  \log L_{p\leq (\log N)^A}(\alpha)
  \nonumber\\
  \ll &
    \frac{(\log N)^{(M-\alpha A(\alpha-\beta))/(1+\alpha-\beta)}}{(A\log\log N)}
    +
    \frac{(\log N)^{A\delta/7}}{(A\log\log N)(\log N)^A}
    \nonumber\\
    &
    +\bigg(\frac{1}{\delta^2}+A\bigg)
    \bigg(
   \frac{(\log N)^{(1+\delta/14)(M+\alpha A)/(1+\alpha-\beta)}(\log N)^{M+\delta/14+1/2}}{\delta^2N^{\pi/2}}\bigg.\nonumber\\
&
\bigg.\qquad\qquad+
\frac{(\log N)^{(M-\alpha A(\alpha-\beta))/(1+\alpha-\beta)}}
{\delta (M+\alpha A)\log\log N }
\bigg).
\end{align*}
We now require that the error term of this estimation is $o(1)$.
This implies that $M-\alpha A(\alpha-\beta)\leq 0$, so
\[
\alpha \geq \frac{A\beta +\sqrt{A^2\beta^2 + 4AM}}{2A}
\quad{\text or}\quad
\alpha \leq \frac{A\beta -\sqrt{A^2\beta^2 + 4AM}}{2A}.
\]
Since we consider the situation $1\geq \alpha >0$, the only possible choice is
\begin{equation}\label{alpha}
1 \geq \alpha \geq \frac{A\beta +\sqrt{A^2\beta^2 + 4AM}}{2A}.
\end{equation}
The inequality
\[
1 \geq \frac{A\beta +\sqrt{A^2\beta^2 + 4AM}}{2A}
\]
gives
\[
M=\max\{1, A\delta/14\} \leq A(1-\beta)=A\delta /14.
\]
Therefore $1\leq A\delta/14$ (which is the condition required in the statement of
Theorem \ref{thm-1})
and $M=A\delta/14$.
The right-hand side of \eqref{alpha} with this value of $M$ is equal to 1.
Therefore from \eqref{alpha} we have $\alpha=1$.
This argument clarifies the reason why in Theorem \ref{thm-1} (and in Theorem
\ref{thm-2}) we only consider the value at $s=1$.

Now, for $\alpha=1$ we obtain
\begin{align}
\label{tuika_de_labelling2}
  &
  \log L(1)
  -
  \log L_{p\leq (\log N)^A}(1)
  \nonumber\\
 & \ll 
    \frac{1}{A\log\log N}
    +\biggl(\frac{1}{\delta^2}+A\biggr)
    \bigg(
   \frac{(\log N)^{A+A\delta/7+\delta/14+1/2}}{\delta^2 N^{\pi/2}}
+
\frac{1}{\delta A (1+\delta/14)\log \log N}
\bigg)\notag\\
%
%
%
%
%
%
&  \ll_{\delta,A}
    \frac{1}{\log\log N},
\end{align}
which implies the assertion of Theorem \ref{thm-1}.
\end{proof}
\par
\begin{proof}[Proof of Theorem~\ref{thm-2}]
From \eqref{sum_over_p} and \eqref{tuika_de_labelling2} we immediately obtain
\begin{equation}\label{L_1}
  \log L(1)
  -
  \sum_{p\leq (\log N)^{14/\delta}}\frac{d_p(1)}{p}
  -
  C(1)
  %
  \ll_{\delta}
    \frac{1}{\log\log N},
  \end{equation}
which is Theorem \ref{thm-2}.
\end{proof}
%
\par
In the case of Dirichlet $L$-functions,
we denote the corresponding $C(\alpha)$ by $C(\alpha,\chi)$.
\begin{rem}
  Let $z\geq 2$.    Since
  \begin{align*}
    C(1,\chi)
    =&
    \sum_{p\leq z}\sum_{\ell\geq 2}\frac{\chi(p^{\ell})}{\ell p^{\ell}}
    +
    \sum_{p> z}\sum_{\ell\geq 2}\frac{\chi(p^{\ell})}{\ell p^{\ell}}
    =
    \sum_{p\leq z}\sum_{\ell\geq 2}\frac{\chi(p^{\ell})}{\ell p^{\ell}}
    +
    O\bigg(
    \sum_{p> z}\sum_{\ell\geq 2}\frac{1}{p^{\ell}}\bigg)
    \\
    =&
    \sum_{p\leq z}\sum_{\ell\geq 2}\frac{\chi(p^{\ell})}{\ell p^{\ell}}
    +
    O\bigg(
    \sum_{p> z}\frac{1}{p(p-1)}\bigg)
    =
    \sum_{p\leq z}\sum_{\ell\geq 2}\frac{\chi(p^{\ell})}{\ell p^{\ell}}
    +
    O\bigg(\frac{1}{z}\bigg),
  \end{align*}
  we see that  $C(1,\chi)$ is twisted version of the constant $c_0$
  which appears in the formula
  \[
  \sum_{p\leq z}\sum_{\ell\geq 2}\frac{1}{\ell p^{\ell}}
  =c_0 +O\bigg(\frac{1}{z}\bigg).
  \]
It is known that $c_0+c_1=\gamma$,
  where $\gamma$ is the Euler constant and $c_1$ is the constant term of
\begin{align}\label{1/p}
  \sum_{p\leq z}\frac{1}{p}=\log\log z + c_1 +O\bigg(\frac{1}{\log z}\bigg)
\end{align}
(for $z\geq 2$; see \cite[Section 5.2]{CM}).
\end{rem}
\section{Proof of Theorem \ref{thm:main_p_s=1}}\label{sec-7}
\par
In this section we prove Theorem \ref{thm:main_p_s=1}.
Let $(\log N)^{14/\delta}\leq y\leq N^{2\pi/5}$.
In \eqref{L_alpha}, we choose $B=(\log y)/(\log\log N)$, that is, $y=(\log N)^B$.
Then $y\geq (\log N)^{14/\delta}$ implies that $B\geq 14/\delta$.
Therefore the choice $A=14/\delta$ is possible.
%
\par
From Lemma~\ref{aux}, for $1-\delta/28 <\alpha < 2-\delta/7$ we have
  \begin{align}\label{lem2-proof}
    &
   -\sum_{y \geq p >(\log N)^{14/\delta}}\frac{d_p(1)e^{-p/v}}{p^{\alpha}}
    +\log L(\alpha)-\log L_{p\leq (\log N)^{14/\delta}}(\alpha)
    \nonumber\\
    \ll &
    \frac{v}{\alpha(\log y) y^{\alpha}}
    +\frac{(\log N)^2}{(\log\log N)(\log N)^{14/\delta}}
 +
    \frac{v^{1+\delta/14}(\log N)^{\delta/14+3/2}}{\delta^2 N^{\pi/2}}+\frac{v^{\beta-\alpha}(\log N)}{\delta^3 \log v},
  \end{align}
  where $\beta=1-\delta/14$.
In view of \eqref{sum_over_p}, the left-hand side is
  \begin{align*}
  =  &
    -\sum_{y\geq p >(\log N)^{14/\delta}}\frac{d_p(1)e^{-p/v}}{p^{\alpha}}
    +\log L(\alpha)
    -
    \sum_{p\leq (\log N)^{14/\delta}}\frac{d_p(1)}{p^{\alpha}}-C(\alpha)
    \\
    &
    +O\left(
    \frac{1}{(\log\log N)(\log N)^{14(2\alpha-1)/\delta}}\right).
  \end{align*}
The error term here is absorbed into the second term on the right-hand side of
\eqref{lem2-proof}, because
$1-\alpha<\delta/28<\delta/14$ implies
$$
2-\frac{14}{\delta}>-\frac{14}{\delta}(2\alpha-1).
$$
Moreover, if $v \geq 2y$, we see that
\begin{align*}
  &
  \sum_{(\log N)^{14/\delta}< p \leq y}\frac{d_p(1)e^{-p/v}}{p^{\alpha}}
  =
  \sum_{(\log N)^{14/\delta}< p \leq y}\sum_{\ell=0}^{\infty}
  \frac{d_p(1)}{\ell ! p^{\alpha}}\bigg(\frac{-p}{v}\bigg)^{\ell}
  \\
  =&
  \sum_{(\log N)^{14/\delta}< p \leq y}
  \frac{d_p(1)}{p^{\alpha}}
  -
  \sum_{\substack{(\log N)^{14/\delta}< p \\p \leq y}}
  \frac{d_p(1)}{p^{\alpha-1}v}
  +
  \sum_{\substack{(\log N)^{14/\delta}< p \\ p \leq y}}\sum_{\ell=2}^{\infty}
  \frac{d_p(1)}{\ell ! p^{\alpha}}\bigg(\frac{-p}{v}\bigg)^{\ell}
  \\
  =&
  \sum_{(\log N)^{14/\delta}< p \leq y}
  \frac{d_p(1)}{p^{\alpha}}
  +
  O\bigg(
  \frac{1}{v}\sum_{\substack{(\log N)^{14/\delta}< p \\ p \leq y}}
  \frac{1}{p^{\alpha-1}}
  +
  \sum_{\substack{(\log N)^{14/\delta} < p \\ p \leq y}}\sum_{\ell=2}^{\infty}
  \frac{1}{\ell ! p^{\alpha}}\bigg(\frac{p}{v}\bigg)^{\ell}
  \bigg)
  \\
  =&
  \sum_{(\log N)^{14/\delta}< p \leq y}
  \frac{d_p(1)}{p^{\alpha}}
  +
  O\bigg(
  \frac{1}{v}\sum_{(\log N)^{14/\delta}< p \leq y}
  \frac{1}{p^{\alpha-1}}
  +
  \sum_{(\log N)^{14/\delta} < p \leq y}
  \frac{1}{p^{\alpha-2}v^2}
  \bigg)
  \nonumber\\
  =:&
  \sum_{(\log N)^{14/\delta}< p \leq y}
  \frac{d_p(1)}{p^{\alpha}}
  +
  E_{\alpha, y},
\end{align*} 
say.
Therefore from \eqref{lem2-proof} we obtain 
\begin{lemm}\label{lem:asymptotic_formula}
  Let $N\geq 16$.
  Under Assumptions~\ref{conti-analy}, \ref{zero-free} and \ref{convex},
  for $7/2 > \Delta > 0$, $(\log N)^{14/\delta}\leq y\leq N^{2\pi/5}$, $v \geq 2y$, 
  and for $1-\delta/28 <\alpha < 2-\delta/7$,
  we have
  \begin{align*}
    &
    \log L(\alpha) - \sum_{p< y}\frac{d_p(1)}{p^{\alpha}}
    - C(\alpha) - E_{\alpha, y}
    \nonumber\\
    \ll &
    \frac{v}{y^{\alpha}(\log y)}
    +
    \frac{(\log N)^2}{(\log\log N)(\log N)^{14/\delta}}
        +
    \frac{v^{1+\delta/14}(\log N)^{\delta/14+3/2}}{\delta^2N^{\pi/2}}+\frac{v^{1-\delta/14-\alpha}(\log N)}{\delta^3 \log v}.
  \end{align*}
  \end{lemm}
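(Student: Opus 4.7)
The plan is to apply Lemma~\ref{aux} directly with a carefully chosen pair $(A,B)$ of truncation parameters and then use the preceding identity \eqref{sum_over_p} to replace $\log L_{p\le (\log N)^{14/\delta}}(\alpha)$ by the explicit Dirichlet sum plus $C(\alpha)$. Finally a short Taylor expansion of $e^{-p/v}$ on the interval $(\log N)^{14/\delta}<p\le y$ will extract the clean sum $\sum_{p\le y}d_p(1)/p^{\alpha}$, the correction $E_{\alpha,y}$ absorbing what is left over.

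First I would set $A=14/\delta$ and $B=(\log y)/(\log\log N)$, so that $(\log N)^B=y$. The hypothesis $(\log N)^{14/\delta}\le y$ guarantees $B\ge 14/\delta=A$, which is what Lemma~\ref{aux} requires. With $A\delta/14=1$, the growth factor $(\log N)^{A\delta/14}$ stays bounded by $\log N$, so the coefficient $\frac{\log N}{\delta^2}+A(\log N)^{A\delta/14}+\frac{(\log N)^{A\delta/14}}{\delta}$ becomes $O(\log N/\delta^3)$. Combining with the two factors on the last line of \eqref{L_alpha} reproduces exactly the last two error terms of the lemma, while the first two terms of \eqref{L_alpha} specialise to $v/(\alpha y^{\alpha}\log y)$ and $(\log N)^2/((\log\log N)(\log N)^{14/\delta})$.

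Next I would substitute $\log F(\alpha)=\log L(\alpha)-\log L_{p\le (\log N)^{14/\delta}}(\alpha)$ and use \eqref{sum_over_p} to rewrite the partial logarithm as $\sum_{p\le (\log N)^{14/\delta}}d_p(1)/p^{\alpha}+C(\alpha)+O((\log\log N)^{-1}(\log N)^{-14(2\alpha-1)/\delta})$. The remaining task is to show that this last error is dominated by $(\log N)^2/((\log\log N)(\log N)^{14/\delta})$, equivalently $14(2\alpha-1)/\delta\ge 14/\delta-2$; this reduces to $\alpha\ge 1-\delta/14$, which is strictly weaker than the hypothesis $\alpha>1-\delta/28$. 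Finally, for $(\log N)^{14/\delta}<p\le y\le v/2$ I would expand $e^{-p/v}=\sum_{\ell\ge 0}(-p/v)^{\ell}/\ell!$; the $\ell=0$ contribution gives the target sum $\sum_{(\log N)^{14/\delta}<p\le y}d_p(1)/p^{\alpha}$, while the $\ell=1$ and $\ell\ge 2$ contributions are precisely the two terms defining $E_{\alpha,y}$. Adding the resulting $\sum_{p\le(\log N)^{14/\delta}}d_p(1)/p^{\alpha}$ piece from step two to the short-range sum produces the full $\sum_{p\le y}d_p(1)/p^{\alpha}$.

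The real work is all encapsulated in Lemma~\ref{aux}; in the present lemma the only subtlety is bookkeeping, namely checking that every auxiliary error (from \eqref{sum_over_p} and from the Taylor expansion) is absorbed by one of the four terms on the right-hand side. The main obstacle I anticipate is making sure the substitutions $A=14/\delta$, $B=(\log y)/(\log\log N)$ are legitimate across the whole range $(\log N)^{14/\delta}\le y\le N^{2\pi/5}$ and that the absorption of the $(\log N)^{-14(2\alpha-1)/\delta}$ term uses only $\alpha>1-\delta/28$, not some accidentally stronger condition; both are delicate enough to warrant explicit verification but neither requires any new idea beyond the estimates already assembled.
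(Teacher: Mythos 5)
Your proposal is correct and follows essentially the same route as the paper: applying Lemma~\ref{aux} with $A=14/\delta$ and $B=(\log y)/(\log\log N)$, invoking \eqref{sum_over_p} to replace $\log L_{p\le(\log N)^{14/\delta}}(\alpha)$ and checking that its $O$-term is absorbed (your condition $\alpha\ge 1-\delta/14$ is exactly the paper's observation $1-\alpha<\delta/28<\delta/14$), and then Taylor-expanding $e^{-p/v}$ to extract the clean sum and $E_{\alpha,y}$. The one small slip is the interim claim that the coefficient in \eqref{L_alpha} becomes $O(\log N/\delta^3)$ after setting $A=14/\delta$; it is in fact $O(\log N/\delta^2)$, and the extra $1/\delta$ only appears after multiplying by the $v^{\beta-\alpha}/(\delta\log v)$ factor, but this does not affect the final bound you state.
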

%
\par
On the term $E_{\alpha,y}$, for $1-\delta/28 <\alpha < 2-\delta/7$,  we have
\[
E_{\alpha, y}
\ll
 \frac{1}{v}\sum_{(\log N)^{14/\delta}< p < y}
  p^{1-\alpha}
  +
  \sum_{(\log N)^{14/\delta} < p < y}
  \frac{p^{2-\alpha}}{v^2}
  =\frac{1}{v}\Sigma_1+\Sigma_2,
\]
say.
It is easy to see that $\Sigma_2\ll y^{3-\alpha}/v^2 \log y$.
As for $\Sigma_1$, partial summation gives
$$
\Sigma_1\ll \frac{y^{2-\alpha}}{\log y}
+\int_{(\log N)^{14/\delta}}^y \frac{x^{1-\alpha}}{\log x}dx,
$$
and the above integral is
\begin{align}\label{chottokuwasiku}
&=\int_{(\log N)^{14/\delta}}^{\sqrt{y}}\frac{x^{1-\alpha}}{\log x}dx+\int_{\sqrt{y}}^y
\frac{x^{1-\alpha}}{\log x}dx
\ll \int_{(\log N)^{14/\delta}}^{\sqrt{y}}x^{1-\alpha}dx+\frac{1}{\log y}\int_{\sqrt{y}}^y
x^{1-\alpha}dx\notag\\
&\ll \frac{1}{2-\alpha}\left(y^{1-\alpha/2}+\frac{y^{2-\alpha}}{\log y}\right)
= \frac{y^{2-\alpha}}{(2-\alpha)\log y}\left(y^{\alpha/2-1}\log y+1\right).
\end{align}
We observe
\begin{align*}
&y^{\alpha/2-1}\log y
=\log y/e^{(1-\alpha/2)\log y}\\
&=\log y/(1+(1-\alpha/2)\log y+(1/2)(1-\alpha/2)^2(\log y)^2+\cdots)\\
&\leq 1/(1-\alpha/2),
\end{align*}
so \eqref{chottokuwasiku} is (and hence $\Sigma_1$ is)
\begin{align*}
\ll \frac{y^{2-\alpha}}{(2-\alpha)\log y}\cdot\frac{4-\alpha}{2-\alpha}
\ll \frac{y^{2-\alpha}}{(2-\alpha)^2\log y}.
\end{align*}  
We now obtain
$$
E_{\alpha,y}\ll \frac{y^{2-\alpha}}{v (2-\alpha)^2\log y}
+\frac{y^{3-\alpha}}{v^2 \log y}.
$$

Therefore Lemma~\ref{lem:asymptotic_formula} implies
\begin{align*}
  \log L(\alpha)
  =&
  \sum_{p \leq  y} \frac{d_p(1)}{p^{\alpha}} +C(\alpha)
  \\
  + &
  O\bigg(
  \frac{y^{2-\alpha}}{v(2-\alpha)^2\log y}+\frac{y^{3-\alpha}}{v^2\log y}
  +
  \frac{v}{y^{\alpha}\log y} + \frac{(\log N)^2}{(\log\log N)(\log N)^{14/\delta}}
  \\
  &
  +
  \frac{v^{1+\delta/14}(\log N)^{\delta/14+3/2}}{\delta^2N^{\pi/2}}
  +
  \frac{v^{1-\delta/14-\alpha}(\log N)}{\delta^3 \log v}
  \bigg).
\end{align*}
We put $v=y$ to obtain Theorem \ref{thm:main_p_s=1}.
\section{Proof of Theorem \ref{cor:apply_PNT} and Theorem \ref{cor:apply_PNT2}}
\label{sec-8}
Hereafter we discuss the case of Dirichlet $L$-functions.
Let $Q\geq 16$, and let $L(s,\chi)$ the Dirichlet $L$-function attached to the
primitive Dirichlet character $\chi$ mod $q (\leq Q)$.
In \eqref{cor-2-1} 
we put $y=Q$.    Then for a real number $a$ with $\delta\log Q>28a\geq 0$,
\begin{align}\label{formula:main_p_s=1_y=Q}
  &\log L(1-a/\log Q,\chi)
  =
  \sum_{p <  Q} \frac{\chi(p)}{p^{1-a/\log Q}} +C(1-a/\log Q,\chi)
  \nonumber\\
  &+ 
  O\bigg(
  \frac{e^a}{\log Q}
  +
  \frac{(\log Q)^2}{(\log\log Q)(\log Q)^{14/\delta}}
  +
  \frac{(\log Q)^{\delta/14+3/2} Q^{1+\delta/14-\pi/2}}{\delta^2}
  +
  \frac{e^a}{\delta^3 Q^{\delta/14}}
  \bigg)
\end{align}
when $\chi\notin E(\delta)$ (and hence $L(s,\chi)$ satisfies Assumption 2 with $N=Q$).
In order to deduce Theorem \ref{cor:apply_PNT}, 
we will evaluate the part
$\displaystyle{\sum_{(\log Q)^{14/\delta} < p \leq Q}}\frac{\chi(p)}{p^{1-a/\log Q}}$
of the first sum on the right-hand side of \eqref{formula:main_p_s=1_y=Q}.


\begin{proof}[Proof of Theorem \ref{cor:apply_PNT}]
Put $T=(\log Q)^3$ and $T\leq x$.
From \eqref{DL2}, \eqref{DL3}, \eqref{DL4} and Assumption~\ref{zero-free}, 
we obtain that
\begin{align}\label{1}
  &
  \sum_{p\leq x}\chi(p)\log p
  =
  \psi(x,\chi)+ O(\sqrt{x}\log x)
  \nonumber\\
  \ll&
  \frac{x^{1-\delta/7}}{\beta_1}+\sum_{|\gamma|\leq T}\frac{x^{1-\delta/7}}{|\rho|}+
  \frac{x(\log qx)^2}{(\log Q)^3}+\sqrt{x}\log x
  \nonumber\\
  \ll&
  x^{1-\delta/7}
  +\sum_{|\gamma|\leq 2}\frac{x^{1-\delta/7}}{|\rho|}
  +\sum_{2< |\gamma|\leq T}\frac{x^{1-\delta/7}}{|\rho|}+
  \frac{x(\log Qx)^2}{(\log Q)^3}+\sqrt{x}\log x.
\end{align}
From \eqref{DL1}, the first sum is
\[
\ll \frac{x^{1-\delta/7}}{\delta/7}\sum_{|\gamma|\leq 2}1
\ll \frac{x^{1-\delta/7}}{\delta}\log q
\]
(where we used the fact that, by the functional equation for Dirichlet $L$-functions
and Assumption \ref{zero-free},
there is no non-trivial zero in the region $0\leq\beta\leq\delta/7$, 
$|\gamma|\leq T$).
Also using \eqref{DL1} we see that
\begin{align*}
  \sum_{2< |\gamma|\leq T}\frac{1}{|\rho|}
  \leq &
  \sum_{2< \gamma \leq T}\frac{1}{\gamma}
  =
  \int_2^T t^{-1} dN(t,\chi)
  \nonumber\\
  =&
  \frac{N(T,\chi)}{T}-\frac{N(2,\chi)}{2}
  + \int_2^T t^{-2} N(t,\chi) dt
  \nonumber\\
  \ll&
  \log T +\log q
  + \int_2^T t^{-2}\cdot t(\log qt) dt
  \nonumber\\
  \ll &
  \log T +\log q + \int_2^T t^{-1}(\log q)dt+ \int_2^T t^{-1}(\log t)dt
  \nonumber\\
  \ll&
  \log T +\log q + (\log q)(\log T) + (\log T)^2
  \nonumber
  \\
  \ll&
  (\log Q)(\log\log Q).
\end{align*}
Therefore from \eqref{1} we obtain
\begin{equation}\label{3}
\sum_{p\leq x}\chi(p)\log p
\ll
x^{1-\delta/7}(\log Q)\bigg(\frac{1}{\delta}+\log\log Q\bigg)
+
\frac{x(\log Qx)^2}{(\log Q)^3}+\sqrt{x}\log x
\end{equation}
for $x\geq 2(\log Q)^3$.
Since $14/\delta>3$, we may apply \eqref{3} for any $x\geq (\log Q)^{14/\delta}$.
Therefore by partial summation we have
\begin{align}\label{one-more}
  &
  \sum_{(\log Q)^{14/\delta}<p\leq Q}\frac{\chi(p)}{p^{1-a/\log Q}}
  =
  \sum_{(\log Q)^{14/\delta}<p\leq Q}\frac{\chi(p)\log p}{p^{1-a/\log Q}\log p}\notag
  \\
  =&
  \frac{1}{Q^{1-a/\log Q}\log Q}\sum_{(\log Q)^{14/\delta}<p\leq Q} \chi(p)\log p\notag
  \\
  &-
  \int_{(\log Q)^{14/\delta}}^Q
  \bigg(\frac{1}{t^{1-a/\log Q}\log t}\bigg)'
  \sum_{(\log Q)^{14/\delta}<p\leq t} \chi(p)\log p \;dt\notag
  \\
  \ll&
  \frac{1}{Q\log Q}
  \bigg(
  Q^{1-\delta/7}(\log Q)\bigg(\frac{1}{\delta}+\log\log Q\bigg)
  +
  \frac{Q(\log Q)^2}{(\log Q)^3}+\sqrt{Q}\log Q
  \bigg)\notag
  \\
  &
  +
  \int_{(\log Q)^{14/\delta}}^Q
  \bigg(\frac{(1-a/\log Q)\log t+ 1}{t^{2-a/\log Q}(\log t)^2}\bigg)\notag
  \\
  &\qquad\times
  \bigg(
  t^{1-\delta/7}(\log Q)\bigg(\frac{1}{\delta}+\log\log Q\bigg)
  +
  \frac{t(\log Qt)^2}{(\log Q)^3}+\sqrt{t}\log t
  \bigg) \; dt.
\end{align}
Since $7/8<1-a/\log Q <1$, we see that
\begin{align*}
\frac{(1-a/\log Q)\log t+1}{t^{2-a/\log Q}(\log t)^2}
\ll \frac{t^{a/\log Q}}{t^2 \log t}
\leq \frac{Q^{a/\log Q}}{t^2 \log t}
=\frac{e^a}{t^2 \log t}
\end{align*}
in the above integrand, and hence
the right-hand side of \eqref{one-more} is
\begin{align*}
  \ll&
  \frac{1}{Q^{\delta/7}}\bigg(\frac{1}{\delta}+\log\log Q\bigg)
  +
  \frac{1}{(\log Q)^2}
  \\
  &
  +
  \int_{(\log Q)^{14/\delta}}^Q
  \bigg(
  \frac{e^a\log Q}{t^{1+\delta/7}\log t}\bigg(\frac{1}{\delta}+\log\log Q\bigg)
  +
  \frac{e^a(\log Qt)^2}{t(\log t)(\log Q)^3}+\frac{e^a}{t^{3/2}}
  \bigg) \; dt
  \\
  \ll&
  \frac{1}{Q^{\delta/7}}\bigg(\frac{1}{\delta}+\log\log Q\bigg)
  +
  \frac{1}{(\log Q)^2}
  \\
  &
  +
  \frac{\delta \log Q}{\log \log Q}
  \bigg(\frac{1}{\delta}+\log\log Q\bigg)
  \int_{(\log Q)^{14/\delta}}^Q
  \frac{e^a}{t^{1+\delta/7}}
  \; dt
  \\
  &
  +
  \int_{(\log Q)^{14/\delta}}^Q
  \frac{e^a(\log Q)^2}{t(\log t)(\log Q)^3}
  \; dt
  +\frac{e^a}{(\log Q)^{7/\delta}}
  \\
  \ll&
  \frac{1}{Q^{\delta/7}}\bigg(\frac{1}{\delta}+\log\log Q\bigg)
  +
  \frac{1}{(\log Q)^2}
  +
    \frac{e^a}{(\log Q)(\log\log Q)}\bigg(\frac{1}{\delta}+\log\log Q\bigg)
  \\
  &
  +
  \int_{(\log Q)^{14/\delta}}^Q
  \frac{e^a}{t(\log t)(\log Q)}
  \; dt
  +
  \frac{e^a}{(\log Q)^{7/\delta}}.
\end{align*}
Since the last integral is $\ll e^a \log\log Q/\log Q$,
We now obtain
\begin{align*}
\sum_{(\log Q)^{14/\delta}<p\leq Q}\frac{\chi(p)}{p^{1-a/\log Q}}
  \ll&
  \frac{e^a}{(\log Q)(\log\log Q)}\bigg(\frac{1}{\delta}+\log\log Q\bigg)
  +
  \frac{e^a\log\log Q}{\log Q}.
\end{align*}
Substituting this estimate into \eqref{formula:main_p_s=1_y=Q},
we obtain
  \begin{align}\label{eeee}
    &
    \log L(1-a/\log Q,\chi)
    -
    \sum_{p\leq (\log Q)^{14/\Delta}}\frac{\chi(p)}{p^{1-a/\log Q}}
    -
    C(1-a/\log Q,\chi)
    \nonumber\\
    \ll &
  \frac{e^a}{(\log Q)(\log\log Q)}\bigg(\frac{1}{\delta}+\log\log Q\bigg)
  +
  \frac{e^a \log\log Q}{\log Q}
    \nonumber\\
    &
    +
    \frac{(\log Q)^2}{(\log\log Q)(\log Q)^{14/\delta}}
    +
    \frac{(\log Q)^{\delta/14+3/2} Q^{1+\delta/14-\pi/2}}{\Delta^2}
    +
    \frac{e^a}{\Delta^3 Q^{\Delta/14}}
    \end{align}
when $\chi\notin E(\delta)$.
Since $\delta<7/2$, the right-hand side of the above is
    \begin{align}\label{eeeee}
    \ll_{\Delta} 
    \frac{e^a \log\log Q}{\log Q}.
  \end{align}
  This implies the first formula \eqref{formula_th4_1} of Theorem \ref{cor:apply_PNT}.

  Lastly, using \eqref{sum_over_p} for Dirichlet $L$-functions
  with $N=Q$ 
  and $\alpha=1-a/\log Q$,
we find
\begin{align*}
  &
  \sum_{p\leq (\log Q)^{14/\Delta}}\frac{\chi(p)}{p^{1-a/\log Q}} +C(1-a/\log Q,\chi)
  \\
=&
\log L_{p\leq (\log Q)^{14/\delta}}(1-a/\log Q,\chi)+O\left(
\frac{1}{(\log Q)^3 \log\log Q}\right),
\end{align*}
since $\delta\log Q> 28a$ and $14/\delta > 4$.
Substituting this into the right-hand side of \eqref{formula_th4_1}, and taking
the exponentials of the both sides, we obtain \eqref{formula_th4_2}.
\end{proof}

\begin{rem}\label{4implies1}
  In the case $0< \Delta <1$, using \eqref{1/p}
  we see that, for $(\log Q)^{\Delta} < y \leq (\log Q)$,
  \begin{align*}
    &
    \left|\sum_{y < p \leq (\log Q)^{14/\delta} }\frac{\chi(p)}{p}\right|
    \leq
    \sum_{(\log Q)^{\delta} < p \leq (\log Q)^{14/\delta} }\frac{1}{p}
    \\
    \leq &
    \log\log (\log Q)^{14/\delta}
    -
    \log\log (\log Q)^{\Delta} +O\bigg(\frac{1}{\log (\log Q)^{\delta}}\bigg)
    \\
    =&
    \log \frac{14}{\Delta}
    +
    \log\log\log Q
    -
    \log \Delta
    -
    \log\log\log Q
    +
    O\bigg(\frac{1}{\delta\log \log Q}\bigg)
    \\
    = &
    \log \frac{14}{\Delta^2}
    +
    O\bigg(\frac{1}{\delta\log \log Q}\bigg)
    \ll_{\Delta}
    1.
\end{align*}
Therefore \eqref{formula_th4_1} essentially implies Proposition~\ref{lem:MW}. 
\end{rem}

\begin{proof}[Proof of Theorem \ref{cor:apply_PNT2}]
We evaluate the sum
\[
\sum_{(\log Q)^A<p\leq (\log Q)^{14/\delta}}\frac{\chi(p)}{p^{1-a/\log Q}},
\]
where $0<A<14/\delta$.
Using Gallagher's large sieve inequality
\[
\sum_{q\leq Q}{\sum_{\chi \text{mod}\;q}}^*
\left|\sum_{M< m \leq M+M'} a_n\chi(n)\right|^2
\leq
(Q^2+\pi M') \sum_{M< m\leq M+M'}|a_n|^2,
\]
where ${\sum}^*$ means that the summation is restricted to primitive 
characters
(see \cite[Theorem 2.5]{Mont}),
we see that
\begin{align}\label{large-sieve}
  &\sum_{q\leq Q}{\sum_{\chi \text{mod}\;q}}^*
  \left|
  \sum_{(\log Q)^A< p \leq (\log Q)^{14/\delta}} \frac{\chi(p)}{p^{1-a/\log Q}}
  \right|^2\nonumber\\
  &\quad\ll
  (Q^2+(\log Q)^{14/\delta}-(\log Q)^A)
  \sum_{(\log Q)^A< p \leq (\log Q)^{14/\delta}}\frac{(\log Q)^{28a/(\delta\log Q)}}{p^2}
  \nonumber\\
  &\quad\ll
  (Q^2+(\log Q)^{14/\delta}) \frac{(\log Q)^{28a/(\delta\log Q)}}{ (\log Q)^A}.
\end{align}
Let $\mathcal{X}$ be the set of all primitive Dirichlet characters
mod $q\leq Q$ which satisfy
\[
\left|\sum_{(\log Q)^A< p \leq (\log Q)^{14/\delta}} \frac{\chi(p)}{p^{1-a/\log Q}}\right|
>
\frac{f(Q)}{(\log Q)^{A/2}},
\]
where $f(Q)$ is as in the statement of Theorem \ref{cor:apply_PNT2}.
Then obviously
\[
\sum_{q\leq Q}{\sum_{\chi \text{mod}\; q}}^*
\left|\sum_{(\log Q)^A< p \leq (\log Q)^{14/\delta}} \frac{\chi(p)}{p^{1-a/\log Q}}\right|^2
>
\frac{(\#\mathcal{X})f(Q)^2}{(\log Q)^{A}}.
\]
Combining this and \eqref{large-sieve} we obtain
\begin{align}\label{estimate_X}
  \#\mathcal{X}
  \ll
  (Q^2+(\log Q)^{14/\delta}) \frac{(\log Q)^{A+(28a)/(\delta\log Q)}}{f(Q)^2(\log Q)^A}
    \ll_{\delta} \frac{Q^2(\log Q)^{28a/(\delta\log Q)}}{f(Q)^2}.
\end{align}
If $\chi\notin\mathcal{X}$, then
\[
\left|\sum_{(\log Q)^A< p \leq (\log Q)^{14/\delta}} \frac{\chi(p)}{p^{1-a/\log Q}}\right|
\leq
\frac{f(Q)}{(\log Q)^{A/2}}.
\]
From this inequality and \eqref{eeeee}, 
for $\chi\notin E(\delta)\cup\mathcal{X}$ we obtain
\begin{align}\label{ffff}
&  \log L(1-a/\log Q,\chi)- 
  \sum_{p\leq (\log Q)^A}\frac{\chi(p)}{p^{1-a/\log Q}} 
  -C(1-a/\log Q,\chi)\notag\\
 & \qquad  \ll_{\delta}
    \frac{e^a \log\log Q}{\log Q}+ \frac{f(Q)}{(\log Q)^{A/2}}.
\end{align}
If $f(Q)=o((\log Q)^{A/2})$,
the right-hand side of the above formula is $o(1)$ as $Q\to\infty$.

Using \eqref{estimate_E} and \eqref{estimate_X} we see that
  \[
  \#(E(\delta)\cup\mathcal{X})
  \ll_{\Delta} Q^{\Delta} + \frac{Q^2(\log Q)^{28a/(\delta\log Q)}}{f(Q)^2},
  \]
while it is proved by Jager \cite{J} that
  \[
  \sum_{q\leq Q}{\sum_{\chi ({\rm mod}\;q)}}^{\!\!\!\!\!*} 1 =\frac{18}{\pi^4}Q^2
  +O(Q(\log Q)^2).
  \]
  Therefore if $\delta<2$ and $(\log Q)^{14a/(\delta\log Q)}=o( f(Q))$,
  then $\#(E(\delta)\cup\mathcal{X})=o(Q^2)$, and hence
we can conclude that \eqref{ffff} holds for almost all $\chi$.
\end{proof}

\par
K. Matsumoto: Graduate School of Mathematics, Nagoya University,
Chikusa-ku, Nagoya 464-8602, Japan\\
and\\
Center for General Education, Aichi Institute of Technology,
Yakusa-cho, Toyota 470-0392, Japan\\
E-mail address: kohjimat@math.nagoya-u.ac.jp
\par
Y. Umegaki: Department of Mathematical and Physical Sciences,
Nara Women's University, Kitauoya-nishimachi, Nara 630-8506, Japan\\
E-mail address: ichihara@cc.nara-wu.ac.jp

\begin{thebibliography}{999}
\bibitem{CM} A. C. Cojocaru and M. R. Murty,
  An Introduction to Sieve Methods and their Applications,
  London Mathematical Society Student Texts \textbf{66}.
  {\it Cambridge University Press}, 2005.
\bibitem{davenport}H. Davenport,
  Multiplicative number theory, Third edition. Revised and with a preface by Hugh L. Montgomery. Graduate Text in Mathematics, \textbf{74}.
  {\it Springer-Verlag, New York}, 2000.
\bibitem{duke}W. Duke,
  Extreme values of Artin $L$-functions and class numbers,
  {\it Compos. Math.} {\bf 136} (2003), 103--115.
\bibitem{Ecrelle}P. D. T. A. Elliott,
  On the size of $L(1,\chi)$,
  {\it J. Reine Angew. Math.} {\bf 236} (1969), 26--36. 
\bibitem{E}P. D. T. A. Elliott,
  The distribution of the quadratic class number,
  {\it Litovsk. Mat. Sb.} {\bf 10} (1970), 189--197.
\bibitem{GS2001}A. Granville and K. Soundararajan,
  Large character sums,
  {\it J. Amer. Math. Soc.} {\bf 14} (2001), 365--397.
\bibitem{GS2003}A. Granville and K. Soundararajan,
  The distribution of values of $L(1,\chi_d)$,
  {\it GAFA, Geom. Funct. Anal.} {\bf 13} (2003), 992--1028.
\bibitem{J}H. Jager,
Some remarks on primitive residue class characters,
{\it Nieuw Archief voor Wisk.} (3){\bf 21} (1973), 44-47.
\bibitem{LW}Y. -K. Lau and J. Wu,
  Estreme values of symmetric powe $L$-functions at $1$,
  {\it Acta Arith.} {\bf 126} (2007), no. 1, 57--76.
\bibitem{L}J. E. Littlewood, 
  On the class number of the corpus $P(\sqrt{-k})$,
  {\it Proc. London Math. Soc.} (2){\bf 27} (1928), 358--372;
  in Collected papers of J. E. Littlewood. Vol. II, The Clarendon Press, Oxford University Press, New York, 1982, 920--934.
\bibitem{MU2018}K. Matsumoto and Y. Umegaki,
  On the value-distribution of the difference between logarithms of two symmetric power $L$-functions,
{\it Intern. J. Number Theory} {\bf 14} (2018), no. 7, 2045--2081.
\bibitem{Mont}H. L. Montgomery,
  {\it Topics in Multiplicative Number Theory},
  Lecture Notes in Math. {\bf 227}, Springer-Verlag, Berlin, Heidelberg, 
  New York, 1971.
\bibitem{MV}H. L. Montgomery and B. C. Vaughan,
  Multiplicative Number Theory I. Classical Theory,
  Cambridge Studies in Advanced Mathematics, 97. 
  {\it Cambridge University Press, Cambridge}, 2007.
\bibitem{MW}H. L. Montgomery and P. J. Weinberger,
  Real quadratic fields with large class number,
  {\it Math. Ann.} {\bf 225} (1977), no. 2, 173--176.
\bibitem{MUU}Y. Morita, A. Umegaki and Y. Umegaki,
  Bicubic number fields with large class numbers,
  {\it Various Aspects of Multiple Zeta Functions in honor of Professor Kohji Matsumoto's $60$th birthday}, Adv. Stud. Pure Math. {\bf 84}, 
  {\it Math. Soc. Japan, Tokyo}, 2020, 335--351.
\bibitem{Pe}A. Perelli,
  General $L$-functions,
  {\it Ann. Mat. Pura Appl.} (4){\bf 130} (1982), 287--306.
\end{thebibliography}
\end{document}